\patchcmd\Gread@eps{\@inputcheck#1 }{\@inputcheck"#1"\relax}{}{}
\newtheorem{theorem}{Theorem}
\newtheorem{definition}{Definition}
\newtheorem{lemma}{Lemma}
\newtheorem{proposition}{Proposition}
\newtheorem{corollary}{Corollary}
\newtheorem{remark}{Remark}
\newtheorem*{example*}{Example} 
\date{}
\numberwithin{equation}{section}
\numberwithin{theorem}{section}
\numberwithin{lemma}{section}
\numberwithin{corollary}{section}
\numberwithin{remark}{section} 
\numberwithin{proposition}{section}
\numberwithin{definition}{section}
\def \R {\mathbb{R}}
\def \loc {\mathrm{loc}}
\begin{document}

\title[Fully nonlinear dead-core systems]{Fully nonlinear dead-core systems}

\author[D.J. Ara\'ujo]{Dami\~ao J. Ara\'ujo}
\address{Department of Mathematics, Federal University of Para\'iba, 58059-900, Jo\~ao Pessoa, PB, Brazil}
\email{araujo@mat.ufpb.br}

\author[R. Teymurazyan]{Rafayel Teymurazyan}
\address{King Abdullah University of Science and Technology (KAUST), Computer, Electrical and Mathematical Sciences and Engineering Division (CEMSE), Thuwal 23955-6900, Saudi Arabia and University of Coimbra, CMUC, Department of Mathematics, Largo D. Dinis, 3000-143 Coimbra, Portugal}{} 
\email{rafayel.teymurazyan@kaust.edu.sa} 
 
\begin{abstract}
	We study fully nonlinear dead-core systems coupled with strong absorption terms. We discover a chain reaction, exploiting properties of an equation along the system and obtain higher sharp regularity across the free boundary. Additionally, we prove geometric measure estimates and obtain coincidence of the free boundaries. We also derive Liouville type theorems for entire solutions. These results are new even for linear systems.

\bigskip

\noindent \textbf{Keywords:}  Elliptic systems, regularity, comparison principle, non-degeneracy, Liouville theorem.

\bigskip

\noindent \textbf{MSC 2020:} 35J57, 35J47, 35J67, 35B53, 35J60, 35B65.
\end{abstract}

\maketitle

\tableofcontents

\section{Introduction}\label{s1}
Due to applications in population dynamics, combustion processes, catalysis processes and in industry, in particular, in bio-technologies and chemical engineering, reaction-diffusion equations with strong absorption terms have been studied intensively in recent years. In literature, the regions where the density of a certain substance (liquid, gas) vanishes, are referred to as dead-cores. When the density of one substance is influenced by that of another one, we then deal with dead-core systems. The system modeling these densities of the reactants $u$, $v:\Omega\rightarrow\R$, where $\Omega\subset\R^n$, is given by
\begin{equation}\label{fgsystem}
\left\{
\begin{array}{ccc}
F(D^2 u,x)&=&f(u,v,x),\,\,\,\text{ in }\,\,\,x\in\Omega,\\[0.2cm]
G(D^2 v,x)&=&g(u,v,x),\,\,\,\text{ in }\,\,\,x\in\Omega.
\end{array}
\right.
\end{equation}
The operators $F$, $G:\mathbb{S}^n\times\Omega\rightarrow\R$, where $\mathbb{S}^n$ is the set of symmetric matrices of order $n$, model the diffusion processes, while the functions $f$ and $g$ are the convection terms of the model. Additionally, systems like \eqref{fgsystem} are related to Lane-Emden systems (see \cite{MNS19}), two membranes problem (see \cite{CDV18,CDS17,S05}), and also model games that combine the tug-of-war with random walks in two different boards (see \cite{MR20}).

The main goal of this paper is to study the regularity of solutions for fully non-linear dead-core systems
\begin{equation}\label{1.1}
\left\{
\begin{array}{ccc}
F(D^2u,x)=v_+^{\,p}, & \text{in} & \Omega,\\[0.2cm]
G(D^2v,x)=u_+^q & \text{in} & \Omega,
\end{array} 
\right. 
\end{equation}
where $\Omega\subset\R^n$ is a bounded domain, $p$ and $q$ are non-negative constants such that $pq<1$, and $F$ and $G$ are fully nonlinear uniformly elliptic operators with uniformly continuous coefficients. The condition $pq<1$ plays the role of the sublinearity assumption (see, for example, \cite{MNS19, ST18} and references therein). In case $pq>1$, dead-core solutions cannot exist, \cite{MNS19}. Solutions of \eqref{1.1} are understood in the viscosity sense (see Definition \ref{d1.1}).

In literature (see, for example, \cite{C89,CIL92, I92,IK91, IK912}) viscosity solutions for systems are defined as locally bounded functions. Since our operators are uniformly elliptic (with continuous coefficients), using a fixed point argument, we are able to prove existence of continuous solutions. Based on this, we define viscosity solutions as continuous functions, which is in accordance with the classical definition for equations. Although existence of solutions, Perron's method, as well as several geometric properties for systems are well studied in the literature, they are valid only under certain superlinearity, monotonicity, or growth conditions, which do not apply in our framework. 

From Krylov-Safonov regularity theory (see \cite{CC95}), one may deduce that viscosity solutions of the system \eqref{1.1} are locally of the class $C^{1,\alpha}$, for $\alpha\in(0,1)$. The ideas introduced by Teixeira in \cite{T16}, where dead-core equations were studied, inspire hope for more regularity near the free boundary. They were implemented to derive higher regularity for the dead-core problems ruled by the infinity Laplacian, \cite{ALT16} (see also \cite{DT19}). The study of those equations, particularly models of the form $\Delta u \sim u^{p}$ with $p\in(0,1)$, has a deep historical background and dates back to the classical Alt-Phillips problem, \cite{AP86, P1, P2}. Unlike dead-core problems for equations, when dealing with systems, one has two main challenges: the lack of a comparison principle and the lack of the classical Perron's method in the framework. The latter for systems is valid only under certain structural assumptions on the operators (see \cite{C89,CIL92,I92,IK91,IK912}). Without these tools, even proving existence of solutions is a challenging task since, for systems, in general, the comparison of a sub- and a super-solution does not hold, \cite[Remark 3.4]{IK91}. As for the comparison principle, it is not valid even for linear systems. For example, when $F=G=\Delta$ in \eqref{fgsystem}, i.e., considering the system for the Laplace operator, the solution of which can be interpreted as the Euler-Lagrange equation of the energy functional
$$
I(u,v):=\int_\Omega\nabla u\cdot\nabla v+\mathcal{F}(u,v)\,dx \longrightarrow \mbox{min},
$$
where $f(u,v,x)=\partial_u \mathcal{F}(u,v,x)$ and $g(u,v,x)=\partial_v \mathcal{F}(u,v)$, the comparison of solutions on the boundary does not imply comparison inside the domain. 

The lack of a comparison principle does not allow the construction of suitable barriers to study the geometric properties of viscosity solutions. Nevertheless, we obtain additional information for solutions $v$ of the second equation of the system by plugging in the information about $u$ derived from the first equation (we refer to it as a chain reaction). In particular, we show that if a pair $(u,v)$ of non-negative functions is a viscosity solution of the system \eqref{1.1}, and $x_0\in\partial\{u+v>0\}$, then
$$
c\,r^{\frac{2}{1-pq}}\le\sup_{B_r(x_0)}\left(u^{\frac{1}{1+p}}+v^{\frac{1}{1+q}}\right)\le C\,r^{\frac{2}{1-pq}} 
$$
where $r>0$ is small enough and $c,\,C>0$ are constants depending on the dimension. In particular, this implies 
$$
u\in C^{\frac{2(1+p)}{1-pq}}\,\,\,\textrm{ and }\,\,\,v\in C^{\frac{2(1+q)}{1-pq}}
$$
along the set $\partial\{u+v>0\}$. Since $(1-pq)^{-1}\to\infty$, as $pq\to1$, then this regularity is higher than that coming from the Krylov-Safonov and Schauder regularity theory. Indeed, for example, for the Laplace operator, as $\Delta u\in C^{0,p}_\loc$ and $\Delta v\in C^{0,q}_\loc$, one has $u\in C^{2,p}_\loc$ and  $v\in C^{2,q}_\loc$, and coupling this into the system, we conclude that $u$, $v\in C_\loc^{2,1^-}$.

Observe that if $F=G$, $p=q$ and $u=v$ in \eqref{1.1}, we recover the regularity result obtained in \cite{T16}. It is noteworthy that the regularity we derive for systems is higher than that of Hamiltonian elliptic systems studied in \cite{ST18}. As shown in \cite{ST18}, the $C^{2,\alpha}$ regularity remains valid for systems with vanishing Neumann boundary condition, when $F=G=-\Delta$ and $p$ and $q$ are chosen in a certain way. 

Additionally, we prove a variant of a weak comparison principle, showing that if one can compare viscosity sub- and super-solutions on the boundary, then at least one of the functions that make the pair of these sub- and super-solutions can still be compared in the domain of the free boundary. The latter leads to several geometric measure estimates for the free boundary. Furthermore, an application of the regularity result implies that whenever $F$ and $G$ vanish at the origin, the free boundaries coincide, 
$$
\partial\{u+v>0\}=\partial\{u>0\}=\partial\{v>0\}.
$$
Another consequence is that if a non-negative entire solution of the system \eqref{1.1} vanishes at a point and has a certain growth at infinity, then it must be identically zero. Our results can be applied to systems with more than two equations.

The paper is organized as follows: in Section \ref{s2}, using Schaefer's fixed point argument, we prove existence of solutions for the system \eqref{1.1} (Proposition \ref{p2.1}) and derive a variant of a weak comparison principle (Lemma \ref{comparison}). In Section \ref{s3}, we prove regularity of solutions of the system \eqref{1.1} (see Theorem \ref{t3.1}) across the free boundary. Section \ref{s4} is dedicated to the study of geometric properties of the free boundary. In particular, we prove the non-degeneracy of viscosity solutions (Theorem \ref{t6.1}), the porosity of the free boundary (Lemma \ref{c6.1}), and coincidence of the free boundaries (Theorem \ref{fbcoincidence}). In Section \ref{s5}, two Liouville-type results are obtained for entire solutions (Theorem \ref{t5.1} and Theorem \ref{liouvthm}). We close the paper with an appendix on the analysis of radial solutions (Appendix \ref{appendixa}).
\section{Existence of solutions and weak comparison principle}\label{s2}
In this section, assuming that the boundary of the domain is of class $C^2$, we prove existence of viscosity solutions of the system 
\begin{equation}\label{systemdirichlet}
\left\{
\begin{array}{ccc}
F(D^2u,x)=v_+^{\,p}, & \text{in} & \Omega,\\[0.2cm]
G(D^2v,x)=u_+^q & \text{in} & \Omega,\\[0.2cm]
u=\varphi,\,v=\psi&\text{on}&\partial\Omega,
\end{array} 
\right. 
\end{equation}
where $\varphi$, $\psi\in C^{0,1}(\partial\Omega)$. Additionally, we prove a weak comparison principle. Throughout the paper, we assume that 
$$
p\ge0,\,\,\,q\ge0,\,\,\,pq<1,
$$
and $F$ and $G$ satisfy 
\begin{equation}\label{1.2}
	\lambda\|N\|\le\mathcal{F}(M+N,x)-\mathcal{F}(M,x)\le\Lambda\|N\|,\,\,\,\forall N\ge0,
\end{equation}
with $M\in\mathbb{S}^n$, $0<\lambda\le\Lambda$, and
\begin{equation}\label{uniformcontinuity}
	|\mathcal{F}(M,x)-\mathcal{F}(M,y)|\le\omega(|x-y|)\left(1+\|M\|\right),
\end{equation}
for a modulus of continuity $\omega$. Here $N$ is a non-negative definite symmetric matrix, therefore the norm $\|N\|$ is equal to the maximum eigenvalue of $N$. Solutions of \eqref{1.1} are understood in the viscosity sense according to the following definition.
\begin{definition}\label{d1.1}
	A couple of functions $u$, $v\in C({\Omega})$ is called a viscosity sub-solution of \eqref{1.1}, if whenever $\phi, \varrho\in C^2(\Omega)$ and $u-\phi$,  $v-\varrho$ attain local maximum at $x_0\in\Omega$, then
	\begin{equation*}
		\left\{
		\begin{array}{ccc}
			F\left(D^2\phi(x_0), x_0\right)\ge v^p_+(x_0), & \text{in} & \Omega,\\[0.2cm] 
			G\left(D^2\varrho(x_0), x_0\right)\ge u^q_+(x_0) & \text{in} & \Omega.
		\end{array} 
		\right.
	\end{equation*}
	Similarly, $u$, $v\in C({\Omega})$ is called a viscosity super-solution of \eqref{1.1}, if whenever $\phi,\varrho\in C^2(\Omega)$ and $u-\phi$, $v-\varrho$ attain local minimum at $x_0\in\Omega$, then
	\begin{equation*}
		\left\{
		\begin{array}{ccc}
			F\left(D^2\phi(x_0), x_0\right)\le v^p_+(x_0), & \text{in} & \Omega,\\[0.2cm] 
			G\left(D^2\varrho(x_0), x_0\right)\le u^q_+(x_0) & \text{in} & \Omega.
		\end{array} 
		\right.
	\end{equation*}
	A couple of functions $u$, $v\in C({\Omega})$ is called a viscosity solution of \eqref{1.1}, if it is both a viscosity sub- and super-solution of \eqref{1.1}.
\end{definition}
\subsection{Existence of solutions} As commented above, neither the classical Perron's method nor the standard comparison principle apply in this framework. Thus, we need new tools to tackle the issue. For that purpose, we suitably construct a continuous and compact map to use the following Schaefer's theorem (see, for example, \cite{Z85}), extending the idea from  \cite{CT20}. 
\begin{theorem}\label{schaefer}
	If $T:X \to X$, where $X$ is a Banach space, is continuous and compact, and the set
	$$
	\mathcal{E}=\{z \in X;\,\,\, \exists\,  \theta \in [0,1] \mbox{ such that } z=\theta T(z)  \}
	$$
	is bounded, then $T$ has a fixed point.
\end{theorem}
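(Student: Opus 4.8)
The plan is to reduce the statement to the Schauder fixed point theorem by composing $T$ with a radial retraction onto a large closed ball. Since $\mathcal{E}$ is bounded, I would first fix a radius $M>0$ with $\|z\|<M$ for every $z\in\mathcal{E}$; the \emph{strict} inequality will be crucial at the end.

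Next, I would introduce the retraction $r\colon X\to\overline{B_M}$, where $B_M=\{x\in X:\|x\|<M\}$, defined by $r(x)=x$ when $\|x\|\le M$ and $r(x)=Mx/\|x\|$ when $\|x\|>M$. This map is continuous and maps all of $X$ into the closed ball. Setting $S:=r\circ T\colon \overline{B_M}\to\overline{B_M}$, one checks that $S$ is continuous (a composition of continuous maps) and compact: $T(\overline{B_M})$ is relatively compact because $T$ is compact, and $r$ sends the compact set $\overline{T(\overline{B_M})}$ onto a compact set, so $S(\overline{B_M})$ is relatively compact. Since $\overline{B_M}$ is a nonempty, closed, bounded, convex subset of the Banach space $X$, Schauder's fixed point theorem yields a point $z_0\in\overline{B_M}$ with $z_0=S(z_0)=r(T(z_0))$.

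It remains to upgrade this to a fixed point of $T$ itself, which I would do by a dichotomy on $\|T(z_0)\|$. If $\|T(z_0)\|\le M$, then $r(T(z_0))=T(z_0)$, hence $z_0=T(z_0)$ and we are done. If instead $\|T(z_0)\|>M$, then $z_0=r(T(z_0))=\theta\, T(z_0)$ with $\theta:=M/\|T(z_0)\|\in(0,1)$, so $z_0\in\mathcal{E}$ and therefore $\|z_0\|<M$ by the choice of $M$; but on the other hand $\|z_0\|=\theta\,\|T(z_0)\|=M$, a contradiction. Hence only the first alternative occurs, and $T$ has a fixed point.

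There is no serious obstacle here beyond importing Schauder's theorem: the only points requiring a little care are the verification that the composed map $S$ is genuinely compact (which uses that $r$ is continuous on all of $X$ and that continuous images of compact sets are compact) and the insistence on choosing $M$ with the strict bound $\|z\|<M$ on $\mathcal{E}$, without which the contradiction in the second case would collapse to a harmless equality.
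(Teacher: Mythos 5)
Your proof is correct, and it is the standard route to Schaefer's theorem (a.k.a.\ the Leray--Schauder alternative): compose $T$ with the radial retraction onto a large closed ball, apply Schauder's fixed point theorem on that ball, and rule out the ``boundary'' alternative by the \emph{a priori} bound on $\mathcal{E}$. The small technical points you flag — that the composed map $S=r\circ T$ is genuinely compact because continuous maps carry compact sets to compact sets, and that $M$ must be chosen with the strict bound $\|z\|<M$ on $\mathcal{E}$ so that the equality $\|z_0\|=M$ in the second case is an actual contradiction — are exactly the ones that need care, and you handle them correctly. Worth noting, though: the paper does not prove this theorem at all; it simply quotes it as a known result from Zeidler's book \cite{Z85} and uses it as a black box in the proof of Proposition~\ref{p2.1}. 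So there is no ``paper's proof'' to compare against; your argument fills in the omitted proof with the classical one.
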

\begin{proposition}\label{p2.1}
	There exists a viscosity solution $(u,v)$ of system \eqref{systemdirichlet}. Moreover, $u$ and $v$ are globally Lipschitz functions.
\end{proposition}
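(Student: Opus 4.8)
The plan is to obtain $(u,v)$ as a fixed point of a suitable map, via Theorem \ref{schaefer}. Work in the Banach space $X:=C(\overline\Omega)\times C(\overline\Omega)$ with $\|(w_1,w_2)\|_X:=\|w_1\|_{L^\infty(\Omega)}+\|w_2\|_{L^\infty(\Omega)}$, and define $T\colon X\to X$ by $T(w_1,w_2):=(u,v)$, where $u$ is the unique viscosity solution of
\begin{equation*}
F(D^2u,x)=(w_2)_+^{\,p}\ \text{ in }\ \Omega,\qquad u=\varphi\ \text{ on }\ \partial\Omega,
\end{equation*}
and $v$ the unique viscosity solution of
\begin{equation*}
G(D^2v,x)=(w_1)_+^{\,q}\ \text{ in }\ \Omega,\qquad v=\psi\ \text{ on }\ \partial\Omega.
\end{equation*}
Since $w_1,w_2\in C(\overline\Omega)$, the sources $(w_2)_+^p$, $(w_1)_+^q$ are continuous and bounded, so each scalar Dirichlet problem has a unique continuous viscosity solution (Perron's method, comparison for a single uniformly elliptic equation, and barriers at the $C^2$ boundary); hence $T$ is well defined, and a fixed point $(u,v)=T(u,v)$ is exactly a viscosity solution of \eqref{systemdirichlet}.

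To apply Theorem \ref{schaefer} I would check three things. \emph{Compactness:} if $\|(w_1,w_2)\|_X\le R$, then $\|(w_2)_+^p\|_\infty\le R^p$ and $\|(w_1)_+^q\|_\infty\le R^q$, and combining classical barrier estimates at the $C^2$ boundary (using the uniform interior and exterior ball conditions and the $L^\infty$ bound on the source) with Caffarelli's interior $C^{1,\alpha}$ estimates for uniformly elliptic equations with bounded source and continuous coefficients yields a bound for $\|u\|_{C^{0,1}(\overline\Omega)}+\|v\|_{C^{0,1}(\overline\Omega)}$ depending only on $n,\lambda,\Lambda,\Omega,\Lip\varphi,\Lip\psi,\|\varphi\|_\infty,\|\psi\|_\infty$ and $R$; since $C^{0,1}(\overline\Omega)\hookrightarrow C(\overline\Omega)$ is compact (Arzel\`a--Ascoli), $T$ sends bounded sets to precompact sets. \emph{Continuity:} if $(w_1^k,w_2^k)\to(w_1,w_2)$ in $X$, then $(w_2^k)_+^p\to(w_2)_+^p$ and $(w_1^k)_+^q\to(w_1)_+^q$ uniformly, and applying the Alexandrov--Bakelman--Pucci estimate to the differences of the associated scalar solutions (which lie in the appropriate Pucci class with source $|(w_2^k)_+^p-(w_2)_+^p|$, resp.\ $|(w_1^k)_+^q-(w_1)_+^q|$, and zero boundary values) gives $\|u^k-u\|_\infty+\|v^k-v\|_\infty\to0$, i.e.\ $T(w_1^k,w_2^k)\to T(w_1,w_2)$.

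\emph{Boundedness of $\mathcal{E}$.} This is the crux. If $(w_1,w_2)=\theta T(w_1,w_2)$ with $\theta\in[0,1]$, set $(u,v):=T(w_1,w_2)$, so $w_1=\theta u$, $w_2=\theta v$, and since $\theta\ge0$,
\begin{equation*}
F(D^2u,x)=\theta^{\,p}v_+^{\,p}\ \text{ in }\ \Omega,\qquad G(D^2v,x)=\theta^{\,q}u_+^{\,q}\ \text{ in }\ \Omega,
\end{equation*}
with $u=\varphi$, $v=\psi$ on $\partial\Omega$. The ABP estimate for each scalar equation, together with the boundedness of $F(0,\cdot)$, $G(0,\cdot)$ and $\theta\le1$, gives $\|u\|_\infty\le A+C\|v\|_\infty^{\,p}$ and $\|v\|_\infty\le B+C\|u\|_\infty^{\,q}$, with $A,B,C$ depending only on $n,\lambda,\Lambda,\Omega,\|\varphi\|_\infty,\|\psi\|_\infty$. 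Inserting the second bound into the first, $\|u\|_\infty\le A+C\bigl(B+C\|u\|_\infty^{\,q}\bigr)^{p}$, and because $pq<1$ an elementary analysis of the map $t\mapsto t-K t^{pq}$ yields $\|u\|_\infty\le M$ for a constant $M$ depending only on the data (not on $\theta$ nor on the particular pair), hence $\|v\|_\infty\le B+CM^q$ and $\mathcal{E}$ is bounded. Theorem \ref{schaefer} then provides a fixed point $(u,v)\in X$, a viscosity solution of \eqref{systemdirichlet}; and for this solution the sources $v_+^p$, $u_+^q$ are continuous and bounded, so the scalar estimate quoted above gives $u,v\in C^{0,1}(\overline\Omega)$.

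I expect the main obstacle to be the global Lipschitz bound for the auxiliary scalar problems with only $L^\infty$ source: it requires marrying boundary barriers (where the $C^2$ regularity of $\partial\Omega$ is used) with the interior $C^{1,\alpha}$ theory so that the resulting modulus is controlled uniformly up to $\partial\Omega$. The other delicate point is recognizing that the exponent restriction $pq<1$ is precisely what closes the a priori bound for $\mathcal{E}$ in the fixed-point scheme.
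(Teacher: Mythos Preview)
Your proposal is correct and follows the same overall strategy as the paper --- Schaefer's fixed point theorem applied to the map that sends a pair of data to the pair of solutions of the two decoupled scalar Dirichlet problems --- but with two implementation differences worth recording.

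First, you work in $X=C(\overline\Omega)\times C(\overline\Omega)$, whereas the paper sets up the fixed-point map on $C^{0,1}(\overline\Omega)\times C^{0,1}(\overline\Omega)$. Your choice makes the compactness step cleaner (a $C^{0,1}$ bound plus Arzel\`a--Ascoli gives precompactness in $C$ directly), and it also lets you argue continuity quantitatively via ABP on the differences, rather than through stability of viscosity solutions plus uniqueness as the paper does.

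Second, and more substantively, your argument for the boundedness of $\mathcal{E}$ couples the two ABP estimates to obtain $\|u\|_\infty\le A+C\|v\|_\infty^{\,p}$ and $\|v\|_\infty\le B+C\|u\|_\infty^{\,q}$ and then closes the loop using $pq<1$. The paper instead exploits the sign of the source: since $F_\theta(D^2g,x)=\theta f_+^{\,p}\ge 0$ (and likewise for the other equation), the maximum principle bounds $g$ and $f$ directly by the boundary data, with no appeal to $pq<1$ at all. The paper's route is shorter and shows that the existence step does not actually rely on the sublinearity condition; your route is still valid, is more robust with respect to the normalization of $F(0,\cdot)$ and $G(0,\cdot)$, and makes transparent the role $pq<1$ would play in a more general coupling.
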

\begin{proof}
	Let $f\in C^{0,1}(\overline{\Omega})$ and let $u\in C^{0,1}(\overline{\Omega})$ be the unique viscosity solution of the following problem:
	\begin{equation}\label{dirichlet1}
	\left\{
	\begin{array}{ccc}
	F(D^2u,x)=f_+^p(x), & \text{in} & \Omega,\\[0.2cm]
	u=\varphi & \text{on} & \partial\Omega,
	\end{array} 
	\right. 
	\end{equation}
	where $\varphi\in C^{0,1}(\partial\Omega)$. The existence and uniqueness of such $u$ are obtained by classical Perron's method (see \cite{CIL92,I89}). Similarly, for a function $g\in C^{0,1}(\overline{\Omega})$, there is a unique function $v\in C^{0,1}(\overline{\Omega})$ that solves the problem
	\begin{equation}\label{dirichlet2}
	\left\{
	\begin{array}{ccc}
	G(D^2v,x)=g_+^q(x), & \text{in} & \Omega,\\[0.2cm]
	v=\psi & \text{on} & \partial\Omega,
	\end{array} 
	\right. 
	\end{equation}
	where $\psi\in C^{0,1}(\partial\Omega)$. For a given pair of functions $(f,g)\in C^{0,1}(\overline{\Omega})\times C^{0,1}(\overline{\Omega})$, we then define 
	$$
	T : C^{0,1}(\overline{\Omega}) \times C^{0,1}(\overline{\Omega}) \to C^{0,1}(\overline{\Omega}) \times C^{0,1}(\overline{\Omega})  
	$$
	by $T(f,g):=(v,u)$, where $v$ and $u$ are the solutions of \eqref{dirichlet2} and \eqref{dirichlet1} respectively. Note that $T$ is well defined, as \eqref{dirichlet1} and \eqref{dirichlet2} have unique, globally Lipschitz solutions (see, for instance, \cite[Theorem 1.4]{SS14}). The aim is to use Schaefer's theorem, so we need to check that $T$ is continuous and compact. 
	
	Let $(f_k,g_k) \to (f,g)$ in $C^{0,1}(\overline{\Omega}) \times C^{0,1}(\overline{\Omega})$, and let 
	$$
	(v_k,u_k):=T(f_k,g_k).
	$$
	We want to show that $T(f_k,g_k) \to T(f,g)$ in $C^{0,1}(\overline{\Omega}) \times C^{0,1}(\overline{\Omega})$. By global Lipschitz regularity, for a universal constant $C>0$, depending on $\varphi$ and $\psi$, one has	
	$$	
	\|u_k\|_{C^{0,1}(\overline{\Omega})}\le C(\|(f_k)_+^p\|_\infty+\|\varphi\|_\infty),
	$$	
	and	 
	$$
	\|v_k\|_{C^{0,1}(\overline{\Omega})}\le C(\|(g_k)_+^q\|_\infty+\|\psi\|_\infty).
	$$
	Since $(f_k,g_k)$ converges, then it is bounded, therefore, $\|f_k\|_\infty$ and $\|g_k\|_\infty$ are bounded uniformly. Thus, by Arzel\'a-Ascoli theorem, the sequence $(v_k,u_k)$ is equicontinuous in $C^{0,1}(\overline{\Omega})\times C^{0,1}(\overline{\Omega})$ and hence, up to a sub-sequence, it converges to some $(v,u)$. Using stability of viscosity solutions under uniform limits, \cite[Proposition 2.1]{I89}, and passing to the limit, we obtain 
	$$
	T(f_k,g_k)=(v_k,u_k)\to(v,u)=T(f,g).
	$$
	The uniqueness of solutions of \eqref{dirichlet1} and \eqref{dirichlet2} guarantees that any other sub-sequence converges to $(v,u)$. Thus, $T$ is continuous.
	
	To see that $T$ is also compact, let $(f_k,g_k)$ be a bounded sequence in $C^{0,1}(\overline{\Omega}) \times C^{0,1}(\overline{\Omega})$. As above, $(v_k,u_k)=T(f_k,g_k) \in C^{0,1}(\overline{\Omega}) \times C^{0,1}(\overline{\Omega})$ is bounded. Then there is a convergent sub-sequence. 
	
	To use Theorem \ref{schaefer}, it remains to check that the set $\mathcal{E}$ of eigenvectors
	$$
	\mathcal{E}:=\{(f,g) \in C^{0,1}(\overline{\Omega}) \times C^{0,1}(\overline{\Omega});\,\,\, \exists\,  \theta \in [0,1] \mbox{ such that } (f,g)=\theta T(f,g) \}
	$$	
	is bounded. Observe that $(0,0)\in\mathcal{E}$ if and only if $\theta=0$. Hence, we can assume $\theta\neq0$. Let $(f,g)\in\mathcal{E}$. For any $0<\theta \leq 1$, we have	
	$$
	\left\{
	\begin{array}{ccc}
	F_\theta\left(D^2 g, x\right) = \theta f_+^p(x), & \text{in} & \Omega,\\[0.2cm]
	g=\theta\varphi & \text{on} & \partial\Omega,
	\end{array} 
	\right. 
	$$
	where $F_\theta(M,x):=\theta F (\theta^{-1} M,x)$ satisfies \eqref{1.2}, and
	$$
	\left\{
	\begin{array}{ccc}
	G_\theta\left(D^2 f, x\right) = \theta g_+^q(x), & \text{in} & \Omega,\\[0.2cm]
	f=\theta\psi & \text{on} & \partial\Omega,
	\end{array} 
	\right. 
	$$
	where $G_\theta(M,x):=\theta G (\theta^{-1} M,x)$ satisfies \eqref{1.2}. As above, from the Krylov-Safonov regularity theory, one has
	$$
	\|g\|_{C^{0,1}(\overline{\Omega})} \leq C(\|f_+^p\|_\infty+\|\varphi\|_\infty),
	$$ 
	and
	$$
	\|f\|_{C^{0,1}(\overline \Omega)}\leq C(\|g_+^q\|_\infty+\|\psi\|_\infty),
	$$ 
	where $C>0$ is a universal constant. However, since $F_\theta(D^2g,x)\ge0$, then $g$ takes its maximum on the boundary, and hence, is bounded by $\|\varphi\|_\infty$. 
	$$
	\|g\|_\infty\le\|\varphi\|_\infty.
	$$
	Similarly,
	$$
	\|f\|_\infty\le\|\psi\|_\infty.
	$$
	Thus, 
	$$
	\|g\|_{C^{0,1}(\overline{\Omega})} \leq C(\|\psi\|_\infty^p+\|\varphi\|_\infty)
	$$
	and
	$$
	\|f\|_{C^{0,1}(\overline{\Omega})} \leq C(\|\varphi\|_\infty^q+\|\psi\|_\infty),
	$$
	where $C>0$ is universal constant. This implies that $\mathcal{E}$ is bounded.
	
	Finally, we can now apply Theorem \ref{schaefer} to conclude that $T$ has a fixed point. This finishes the proof.
\end{proof}
\subsection{Weak comparison principle}
Next, we obtain a variant of a weak comparison principle for the system \eqref{1.1}. We assume that $F$ and $G$ are uniformly elliptic operators with constant coefficients, i.e., 
\begin{equation}\label{ellipticity}
	\lambda\|N\|\le\mathcal{F}(M+N)-\mathcal{F}(M)\le\Lambda\|N\|,\,\,\,\forall N\ge0,
\end{equation}
with $M\in\mathbb{S}^n$, $0<\lambda\le\Lambda$. As commented earlier, the absence of the classical Perron's method in our framework forces us to find an alternative way to derive information on relations between viscosity solutions and super-solutions of the system. Nevertheless, we discover a chain reaction and obtain a weak form of comparison principle for the system
\begin{equation}\label{constantcoeff}
	\left\{
	\begin{array}{ccc}
		F(D^2 u)&=&v_+^p,\,\,\,\text{ in }\,\,\,x\in\Omega,\\[0.2cm]
		G(D^2 v)&=&u_+^q,\,\,\,\text{ in }\,\,\,x\in\Omega.
	\end{array}
	\right.
\end{equation}
More precisely, although, in general, one cannot compare viscosity super- and sub- solutions $(u^*,v^*)$ and $(u_*,v_*)$ in the domain, we show that at least one of the following inequalities holds: 
\begin{equation*}
    u^*\ge u_*\,\,\textrm{ or }\,\,v^*\ge v_*,
\end{equation*}
provided both of them hold at the boundary, as shows the following lemma.
\begin{lemma}\label{comparison}
	Let $F$ and $G$ satisfy \eqref{ellipticity}. If $(u^*,v^*)$ is a viscosity super-solution and $(u_*,v_*)$ is a viscosity sub-solution of \eqref{constantcoeff} such that $u^*\ge u_*$ and $v^*\ge v_*$ on $\partial\Omega$, then
	$$
	\max\{u^*-u_*,v^*-v_*\}\ge0\,\,\textrm{ in }\,\,\Omega.
	$$    
\end{lemma}
\begin{proof}
	Suppose the conclusion of the lemma is not true, i.e., there exists $x_0\in\Omega$ such that
	\begin{equation}\label{contrad}
		u_*(x_0)>u^*(x_0)\,\,\,\textrm{ and }\,\,\,v_*(x_0)>v^*(x_0).
	\end{equation}
	Set
	\begin{equation}\nonumber
		m:=\sup\limits_{\overline{\Omega}} (u_*-u^*)>0,
	\end{equation}
	and for each $\varepsilon>0$ small, define
	$$
	m_\varepsilon := \sup\limits_{\overline{\Omega}\times \overline{\Omega}} \left(u_*(x)-u^*(y) - \frac{1}{2\varepsilon}|x-y|^2 \right) < \infty.
	$$
	Let $(x_\varepsilon,y_\varepsilon) \in \overline{\Omega} \times \overline{\Omega}$ be a point where the supremum is attained. Then, \cite[Lemma 3.1]{CIL92}, one has
	$$
	\lim\limits_{\varepsilon \to 0} \frac{1}{\varepsilon}|x_\varepsilon-y_\varepsilon|^2=0 \quad \mbox{and} \quad \lim\limits_{\varepsilon \to 0}m_\varepsilon = m.
	$$
	In particular, 
	$$
	\lim\limits_{\varepsilon \to 0}x_\varepsilon = \lim\limits_{\varepsilon \to 0} y_\varepsilon = z_0,
	$$
	where 
	$$
	u_*(z_0)-u^*(z_0)=m > 0 \geq \sup_{\,\partial\Omega} (u_*-u^*).
	$$
	Hence, for $\varepsilon$ sufficiently small, we have $x_\varepsilon, y_\varepsilon \in \Omega'$, for some $\Omega'\subset\subset\Omega$. Therefore (see \cite[Theorem 3.2]{CIL92}), there exist matrices $M,N \in \mathbb{S}^n$, such that
	\begin{equation}\label{jets}
		\left( \frac{x_\varepsilon-y_\varepsilon}{\varepsilon}, M \right) \in \overline{J}^{2,+}_\Omega u_*(x_\varepsilon) \quad \mbox{and} \quad \left( \frac{y_\varepsilon-x_\varepsilon}{\varepsilon}, N \right) \in \overline{J}^{2,-}_\Omega u^*(y_\varepsilon),
	\end{equation}
	where 
	\begin{equation}\nonumber
		-\frac{3}{\varepsilon}
		\left(
		\begin{array}{cc}
			I  & 0  \\
			0  & I
		\end{array}
		\right)
		\leq
		\left(
		\begin{array}{cc}
			M  & 0  \\
			0  & N
		\end{array}
		\right)    
		\leq
		\frac{3}{\varepsilon}
		\left(
		\begin{array}{cc}
			I  & -I  \\
			-I  & I
		\end{array}
		\right).
	\end{equation}
	In particular, this implies that $M \leq N$. Combining \eqref{ellipticity} and \eqref{jets}, we obtain 
	$$
	\left(v_*(x_\varepsilon)\right)_+^p \le F(M) \le F(N) \leq  \left(v^*(y_\varepsilon)\right)_+^p,
	$$
	for each $\varepsilon>0$. By letting $\varepsilon \to 0$, we conclude
	$$
	v^*(z_0) - v_*(z_0) \ge 0,
	$$
	which contradicts \eqref{contrad}. 
\end{proof}
\section{Regularity of solutions at the free boundary}\label{s3}
In this section, we prove our main result, deriving regularity for solutions of \eqref{1.1} across the free boundary, $\partial\{|(u,v)|>0\}$, where 
\begin{equation}\label{2.1}	
|(u,v)| := u_+^{\frac{1}{1+p}}+v_+^{\frac{1}{1+q}}.	
\end{equation}
As an auxiliary step, we show that bounded solutions of a system that vanish at a point can be flattened, meaning that they can be made smaller than a given positive constant once the right-hand side is suitably perturbed. As is seen below, there is a natural phenomenon, a \textit{domino effect}, when a certain control over one of the diffusions dictates a wave across the system. 

We say $(u,v)\ge 0$, if both $u$ and $v$ are non-negative. As our results are of a local nature, we consider $B_1$ instead of $\Omega$.
\begin{lemma}\label{l2.1}
	For every $\mu>0$, there exists $\kappa>0$ depending only on $\mu$, $p$, $q$, $\lambda$, $\Lambda$, $n$, such that if $u,\,v\in[0,1]$, $u(0)=v(0)=0$ and 
	\begin{equation*}
	\left\{
	\begin{array}{ccc}
	F(D^2u,x)&=&\delta v_+^p,\,\,\text{in}\,\,B_1,\\[0.2cm]
	G(D^2v,x)&=&u_+^q\,\,\text{in}\,\,B_1.
	\end{array} 
	\right.
	\end{equation*}
	in the viscosity sense for $\delta\in(0,\kappa)$, then
	$$
	\sup_{B_{1/2}}|(u,v)|\le\mu.
	$$
\end{lemma}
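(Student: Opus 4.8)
\emph{Strategy.} The plan is to prove the lemma by compactness, extracting from the system a chain reaction: forcing $\delta$ to be small makes $u$ flat near its zero, which makes the source $\gamma u_+^{\,q}$ of the second equation negligible, so that $v$ is flat as well. Since $|(u,v)|=u^{1/(1+p)}+v^{1/(1+q)}$, it suffices to produce $\kappa$ with $\sup_{B_{1/2}}u\le(\mu/2)^{1+p}$ and $\sup_{B_{1/2}}v\le(\mu/2)^{1+q}$ whenever $\delta<\kappa$.

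\emph{The bound on $u$.} This one is direct. Since $v\le1$ we have $0\le\delta^2 v_+^{\,p}\le\delta^2$, so by uniform ellipticity \eqref{1.2} the function $u$ lies in the Krylov--Safonov class $S^{*}(\lambda,\Lambda,\delta^2)$; moreover $u\ge0$ and $u(0)=0$. The Harnack inequality (see \cite{CC95}) on $B_{3/4}$ then gives
$$
\sup_{B_{3/4}}u\le C\Big(\inf_{B_{3/4}}u+\delta^2\Big)=C\delta^2,\qquad C=C(n,\lambda,\Lambda),
$$
because $0\in B_{3/4}$ forces $\inf_{B_{3/4}}u=0$. Combining this with the interior $C^{1,\alpha}$ estimate and with $Du(0)=0$ (the origin being a minimum of $u$) upgrades it to the quantitative flatness $u(x)\le C\delta^2|x|^{1+\alpha}$ in $B_{1/2}$. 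Choosing $\kappa$ small enough that $C\kappa^2\le(\mu/2)^{1+p}$ thus settles the $u$-part, and at the same time records the rate of vanishing of $u$ across $\partial\{u>0\}$ that drives the next step.

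\emph{The bound on $v$.} For this half I would argue by contradiction: if the conclusion failed for some $\mu_0>0$, there would be $\delta_j\downarrow0$, $\gamma_j>0$, operators $F_j,G_j$ satisfying \eqref{1.2}, and solutions $(u_j,v_j)$ with $u_j,v_j\in[0,1]$, $u_j(0)=v_j(0)=0$, yet $\sup_{B_{1/2}}v_j\ge c_0>0$. By the previous step $u_j\to0$ in $C_{\loc}(B_1)$ with $u_j(x)\le C\delta_j^2|x|^{1+\alpha}$, so that the source term of the $v_j$-equation, $\gamma_j(u_j)_+^{\,q}$, tends to $0$ on compact subsets of $B_1$ --- this is the point requiring care, where one must use both the decay of $u_j$ from Step~1 and the trapping $v_j\in[0,1]$. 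Then $v_j$ solves $G_j(D^2v_j,x)=\gamma_j(u_j)_+^{\,q}$ with uniformly bounded right-hand side and $\|v_j\|_\infty\le1$, so Krylov--Safonov yields uniform interior $C^{0,\alpha}$ bounds; along a subsequence $v_j\to v_\infty$ locally uniformly, $G_j\to\overline G$ (a uniformly elliptic limit), and stability of viscosity solutions under uniform limits (cf. \cite[Proposition 2.1]{I89}) gives $\overline G(D^2v_\infty,x)=0$ in $B_1$ with $v_\infty\ge0$ and $v_\infty(0)=0$. The strong minimum principle forces $v_\infty\equiv0$, contradicting $\sup_{B_{1/2}}v_j\ge c_0$. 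Hence, after possibly shrinking $\kappa$, $\sup_{B_{1/2}}v\le(\mu/2)^{1+q}$ for $\delta<\kappa$, and combining with Step~1 the lemma follows.

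\emph{Main obstacle.} I expect the heart of the matter to be this last passage to the limit for the coupled problem. The analogous scalar flatness statement is routine; but the system admits no comparison principle, so $v$ cannot be confined by barriers directly, and one is forced to process one scalar equation at a time, treating the companion unknown as a parameter. The chain only closes thanks to the sharp vanishing rate of $u$ across $\partial\{u>0\}$ obtained in Step~1, which is precisely what tames the coupling constant $\gamma$ once it is frozen into the equation for $v$; arranging this interplay, rather than the individual scalar estimates, is the delicate part.
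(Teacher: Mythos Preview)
Your overall strategy---compactness plus the strong minimum principle, first flattening $u$ through the small parameter $\delta$ and then feeding this into the second equation to flatten $v$---is precisely the ``chain reaction'' the paper runs. Your direct Harnack bound $\sup_{B_{3/4}}u\le C\delta^2$ in Step~1 is even a mild streamlining of the paper's argument, which treats both $u_i$ and $v_i$ inside a single contradiction.

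There is, however, a genuine gap in Step~2. You allow the coupling constants $\gamma_j$ to vary along the contradicting sequence and then assert that $\gamma_j(u_j)_+^{\,q}\to 0$ locally and is ``uniformly bounded''. Neither claim is justified: nothing in the hypotheses prevents $\gamma_j\to\infty$ fast enough to cancel the decay $u_j\le C\delta_j^2$. Concretely, with $F=G=\Delta$ the pair $u=A|x|^{\alpha}$, $v=B_0|x|^{\beta}$ (with $\alpha=\tfrac{2(1+p)}{1-pq}$, $\beta=\tfrac{2(1+q)}{1-pq}$) solves the system for $\delta^2=A\alpha(\alpha+n-2)B_0^{-p}$ and $\gamma=B_0\beta(\beta+n-2)A^{-q}$; fixing $B_0\in(0,1)$ and sending $A\to 0$ makes $\delta\to 0$ and $\gamma\to\infty$ while $\sup_{B_{1/2}}v=B_0\,2^{-\beta}$ stays bounded away from zero. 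So neither the pointwise $C^{1,\alpha}$ decay of $u_j$ you record nor the trapping $v_j\in[0,1]$ can tame an unbounded $\gamma_j$, and without a bound on the right-hand side Krylov--Safonov gives no equicontinuity for the $v_j$.

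The paper sidesteps this by keeping $\gamma$ \emph{fixed} along the sequence (note that it writes $\gamma$, not $\gamma_i$). Then $\gamma(u_i)_+^q\le\gamma$ is trivially bounded, compactness of the $v_i$ follows, and once $u_i\to 0$ one gets $\gamma(u_i)_+^q\to 0$, so the limit $v_\infty$ solves a homogeneous equation, is nonnegative with $v_\infty(0)=0$, and hence vanishes identically. This is all that is needed in Theorem~\ref{t3.1}, where $\gamma$ is determined at the outset and remains unchanged through the iteration. If you make the same choice---fix $\gamma$ (or take $\gamma_j$ uniformly bounded) in the contradiction---your argument goes through, and the $C^{1,\alpha}$ upgrade in Step~1 becomes unnecessary: the plain Harnack bound already suffices.
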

\begin{proof}
	We argue by contradiction and suppose the conclusion of the lemma is not true, i.e., for $\mu_0>0$, there exist sequences of functions $u_i$, $v_i$ such that $u_i,\,v_i\in[0,1]$, $u_i(0)=v_i(0)=0$ and 
	\begin{equation*}
	\left\{
	\begin{array}{ccc}
	F_i(D^2u_i,x)&=&i^{-1}\left(v_i\right)_+^p,\,\,\text{in}\,\,B_1,\\[0.2cm]
	G_i(D^2v_i,x)&=&\left(u_i\right)_+^q\,\,\text{in}\,\,B_1,
	\end{array} 
	\right.
	\end{equation*}
	where $F_i$, $G_i$ are elliptic operators satisfying \eqref{1.2} and \eqref{uniformcontinuity}, but
	\begin{equation}\label{2.2}
	\sup_{B_{1/2}}|(u_i,v_i)|>\mu_0.
	\end{equation}
	By the Krylov-Safonov regularity theory (see, for example, \cite{CC95}), up to sub-sequences, $u_i$ and $v_i$ converge locally uniformly in $B_{2/3}$ to respectively $u_\infty$  and $v_\infty$, as $i\rightarrow\infty$. Clearly, $u_\infty(0)=0$, $u_\infty\in[0,1]$, and by stability of viscosity solutions, $u_\infty$ in $B_{2/3}$ satisfies 
	$$
	F_\infty(D^2u_\infty,x)=0,
	$$
	where $F_\infty$ satisfies \eqref{1.2} and \eqref{uniformcontinuity}. The strong maximum principle then implies that $u_\infty\equiv0$. The latter provides that in addition to $v_\infty(0)=0$ and $v_\infty\in[0,1]$, one also has 
	$$
	G_\infty(D^2v_\infty,x)=0,
	$$ 
	where $G_\infty$ satisfies \eqref{1.2} and \eqref{uniformcontinuity}. This \textit{chain reaction} allows one to apply once more the strong maximum principle and conclude that $v_\infty\equiv0$, which contradicts \eqref{2.2}.
\end{proof}
Next, we apply Lemma \ref{l2.1} to obtain regularity of solutions. Geometrically, it reveals that viscosity solutions of the system \eqref{1.1} touch the free boundary in a smooth fashion. In fact, it provides quantitative information on the speed at which bounded viscosity solution detaches from the dead core. In the next section, we show that it is the exact speed at which viscosity solutions of \eqref{1.1} grow.
\begin{theorem}\label{t3.1}
	Let $(u,v)$ be a non-negative viscosity solution of \eqref{1.1} in $B_1$. There exists a constant $C>0$, depending only on $\lambda,\Lambda,p,q$, $\|u\|_\infty,\|v\|_\infty$, such that for  $x_0\in \partial\{|(u,v)|>0\} \cap B_{1/2}$ there holds
	\begin{equation}\label{mainest}
	|(u(x),v(x))|\le C\,|x-x_0|^{\frac{2}{1-pq}},
	\end{equation}
	for any $x \in B_{1/4}$. In particular,
	$$
	u(x)\le C\,|x-x_0|^{\frac{2(1+p)}{1-pq}} 
	\quad \mbox{and} \quad
	v(x) \le C\,|x-x_0|^{\frac{2(1+q)}{1-pq}}.
	$$
\end{theorem}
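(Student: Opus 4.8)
The plan is to prove the decay estimate \eqref{mainest} by an iteration (scaling/compactness) argument built on the flattening Lemma \ref{l2.1}, following the philosophy of Teixeira's work on dead-core equations but carefully tracking the coupled exponents. Normalize first: since the estimate is local and the constant is allowed to depend on $\|u\|_\infty$ and $\|v\|_\infty$, I may assume $0 \le u, v \le 1$ in $B_1$ after a standard rescaling of the domain and of the unknowns (absorbing the resulting multiplicative constants into the operators, which still satisfy \eqref{1.2}, and into the right-hand sides). It also suffices to prove the estimate at $x_0 = 0 \in \partial\{|(u,v)|>0\}$, i.e. $u(0) = v(0) = 0$, with a radius bound valid for all small $r$; the passage from a pointwise estimate at the origin to the estimate \eqref{mainest} for all $x \in B_{1/4}$ with a free boundary point $x_0 \in B_{1/2}$ is routine covering.

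The heart of the matter is the following discrete decay: there exist a universal $\rho \in (0, 1/2)$ and a constant $\kappa > 0$ (the one from Lemma \ref{l2.1} for a suitable choice of $\mu$) such that, writing $\alpha := \tfrac{2}{1-pq}$,
\begin{equation*}
\sup_{B_{\rho^k}} |(u,v)| \le \rho^{k\alpha}, \qquad k = 0, 1, 2, \dots
\end{equation*}
This is proved by induction on $k$. For $k=0$ it is the normalization. For the inductive step, one rescales: set
\begin{equation*}
u_k(x) := \frac{u(\rho^k x)}{\rho^{k\alpha(1+p)}}, \qquad v_k(x) := \frac{v(\rho^k x)}{\rho^{k\alpha(1+q)}},
\end{equation*}
so that (using the algebraic identity $\alpha(1+p)\,q + 2 = \alpha(1+q)$ and its symmetric counterpart, which is exactly where $\alpha = \tfrac{2}{1-pq}$ comes from) the pair $(u_k, v_k)$ solves a system of the form $\widetilde F_k(D^2 u_k, x) = \rho^{k\beta}(v_k)_+^p$, $\widetilde G_k(D^2 v_k, x) = (u_k)_+^q$ for some $\beta > 0$, with $\widetilde F_k, \widetilde G_k$ still satisfying \eqref{1.2}, with $u_k(0) = v_k(0) = 0$, and — by the inductive hypothesis rewritten in terms of $|(u_k,v_k)| = \rho^{-k\alpha}|(u,v)|(\rho^k \cdot)$ — with $0 \le u_k, v_k \le 1$ in $B_1$. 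Choosing $k$-independently a small coefficient in front of $(v_k)_+^p$: note $\rho^{k\beta} \le 1$ is not enough, so one instead arranges that the coefficient $\delta^2 := \rho^{k\beta}$ is at most $\kappa^2$ by absorbing a fixed finite number of initial steps into the constant — more precisely, one applies Lemma \ref{l2.1} with $\mu := \rho^{\alpha}$ to get $\kappa(\mu)$, then fixes $\rho$ small so that $\rho^\beta < \kappa^2$ (possible since $\beta > 0$), giving $\sup_{B_{1/2}} |(u_k, v_k)| \le \mu = \rho^\alpha$, which rescales back to $\sup_{B_{\rho^{k+1}}} |(u,v)| \le \rho^{(k+1)\alpha}$, closing the induction. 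One subtlety: the operators $\widetilde F_k$ depend on $k$ through dilation of the $x$-variable, but \eqref{1.2} is dilation-invariant and the compactness in Lemma \ref{l2.1} only used \eqref{1.2}, so this is harmless.

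Finally, the discrete estimate is upgraded to the continuous one \eqref{mainest} by a standard interpolation in the radius: given $0 < r < 1/4$, pick $k$ with $\rho^{k+1} \le r < \rho^k$, whence $\sup_{B_r}|(u,v)| \le \sup_{B_{\rho^k}}|(u,v)| \le \rho^{k\alpha} = \rho^{-\alpha}(\rho^{k+1})^{\alpha} \le \rho^{-\alpha} r^{\alpha}$, so $C := \rho^{-\alpha}$ (times the constants accumulated from the normalization and the finitely many absorbed steps) works; for $r \ge 1/4$ the bound is trivial after adjusting $C$. The in-particular statement then follows by splitting $|(u,v)| \le C|x-x_0|^\alpha$ into its two non-negative summands: $u^{1/(1+p)} \le C|x-x_0|^\alpha$ gives $u \le C^{1+p}|x-x_0|^{2(1+p)/(1-pq)}$, and similarly for $v$. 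I expect the main obstacle to be the bookkeeping in the rescaling step — verifying that the dead-core exponent $\alpha = \tfrac{2}{1-pq}$ is precisely the one that makes both rescaled equations close up simultaneously (the two scaling identities must hold at once), and ensuring the extra coefficient $\rho^{k\beta}$ that appears can be made uniformly smaller than the threshold $\kappa$ from Lemma \ref{l2.1} rather than merely bounded, which is what forces the choice of $\rho$ and the absorption of finitely many steps into $C$.
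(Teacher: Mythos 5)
The overall skeleton — flattening Lemma~\ref{l2.1}, inductive rescaling, interpolation in the radius — matches the paper's proof, but your key rescaling computation contains a genuine error. With $\alpha := \tfrac{2}{1-pq}$ and $u_k(x) := \rho^{-k\alpha(1+p)} u(\rho^k x)$, $v_k(x) := \rho^{-k\alpha(1+q)} v(\rho^k x)$, one has $(D^2 u)(\rho^k x) = \rho^{k(\alpha(1+p)-2)} D^2 u_k(x)$ and $v(\rho^k x)_+^p = \rho^{k\alpha(1+q)p}(v_k)_+^p$. Converting $F\bigl((D^2 u)(\rho^k \cdot), \rho^k \cdot\bigr) = v(\rho^k \cdot)_+^p$ into an equation for $u_k$ with the ellipticity constants preserved forces you to multiply through by $\rho^{-k(\alpha(1+p)-2)}$, and the net exponent on the right-hand side is
\[
k\bigl[\alpha(1+q)p - \alpha(1+p) + 2\bigr] \;=\; k\bigl[\alpha(pq-1)+2\bigr] \;=\; 0,
\]
so the rescaled system is $\widetilde F_k(D^2 u_k,x) = (v_k)_+^p$, $\widetilde G_k(D^2 v_k,x) = (u_k)_+^q$ with \emph{no} small prefactor. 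The exponent $\alpha = \tfrac{2}{1-pq}$ is precisely the one that makes the system scaling-neutral; that is why it is the critical exponent. Your $\beta$ is zero, so the plan to ``fix $\rho$ small so that $\rho^\beta < \kappa^2$'' cannot be executed: $\rho^{k\beta}=1$ regardless of $\rho$ and $k$.

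The missing ingredient is a one-time \emph{initial} normalization, separate from the iteration, that makes the coefficient in front of $v_+^p$ smaller than the threshold $\kappa^2$ from Lemma~\ref{l2.1}. This is what the paper's $(a,b)$-rescaling does: with $a = \tfrac{\kappa}{2}\bigl(\|u\|_\infty + \|v\|_\infty\bigr)^{(p-1)/4}$ and $b = \bigl(\|u\|_\infty+\|v\|_\infty\bigr)^{1/2}$, the pair $u_1 = b\,u(a\cdot)$, $v_1 = b\,v(a\cdot)$ solves $\widetilde F(D^2 u_1, x) = \tfrac{\kappa^2}{4}(v_1)_+^p$, and the coefficient $\tfrac{\kappa^2}{4}<\kappa^2$ is then exactly preserved under the neutral dyadic scalings, so Lemma~\ref{l2.1} applies at every stage. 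Your version also has a circularity even if $\beta$ were positive: you set $\mu := \rho^\alpha$, which fixes $\kappa(\mu)$, and then want to choose $\rho$ (hence $\mu$) afterwards to make $\rho^\beta<\kappa(\mu)^2$; since $\kappa(\mu)\to0$ as $\mu\to0$, this is not obviously achievable. The paper avoids this by fixing $\mu = 2^{-2/(1-pq)}$ as an absolute constant once and for all and then using the free parameter $a$ in the initial normalization to reach the threshold. The remaining parts of your outline — the verification from the inductive hypothesis that $u_k, v_k\in[0,1]$, the final interpolation, and the passage from the bound on $|(u,v)|$ to the individual bounds on $u$ and $v$ — are sound.
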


\begin{proof}
	Without loss of generality, we assume that $x_0=0$. 
	
	Take now $\mu=2^{-\frac{2}{1-pq}}$, and let $\kappa>0$ be as in Lemma \ref{l2.1}. We need to apply Lemma \ref{l2.1} to suitably chosen sequences of functions. We construct the first pair of functions in the following way. Set
	$$
	u_1(x):=b\cdot u(ax)\,\,\,\,\textrm{ and }\,\,\,\,v_1(x):=b\cdot v(ax),
	$$
	where 
	$$
	b:=\min\left\{1,\kappa^{\frac{1}{1-p}},\|u\|_\infty^{-1},\|v\|_\infty^{-1}\right\}\,\,\textrm{ and }\,\,a:=b^{\frac{q-1}{2}}. 
	$$
	Note that $u_1$, $v_1\in[0,1]$. As $0\in\partial\{|(u,v)|>0\}$, then $u_1(0)=v_1(0)=0$. Also $(u_1,v_1)\ge0$ is a viscosity solution of the system 
	\begin{equation}\label{3.2}
	\left\{
	\begin{array}{cclcc}
	\tilde{F}(D^2u_1,x)&=&b^{q-p}(v_{1}(x))_+^p & \text{in} & B_1,\\[0.2cm]
	\tilde{G}(D^2v_1,x)&=&(u_{1}(x))_+^q & \text{in} & B_1,
	\end{array} 
	\right.
	\end{equation}
	where $\tilde{F}(M,x):=b^qF(b^{-q}M,ax)$, $\tilde{G}(N,x):=b^qG(b^{-q}N,ax)$ are $(\lambda,\Lambda)$ - elliptic operators with uniformly continuous coefficients, i.e., satisfy \eqref{1.2} and \eqref{uniformcontinuity}. Applying Lemma \ref{l2.1}, we deduce
	\begin{equation}\label{3.3}
	\sup_{B_{1/2}}|(u_1,v_1)|\le 2^{-\frac{2}{1-pq}}.
	\end{equation}
	Next, we define sequences of functions $u_2,v_2:B_1\rightarrow\R$ by
	$$
	u_2(x):=2^{\frac{2}{1-pq}(1+p)}u_1\left(\frac{x}{2}\right)\,\,\,\textrm{ and }\,\,\,v_2(x):=2^{\frac{2}{1-pq}(1+q)}v_1\left(\frac{x}{2}\right).
	$$
	Using \eqref{3.3} we have that $u_2,v_2\in[0,1]$. Also $u_2(0)=v_2(0)=0$, and $(u_2,v_2)\geq0$ is a viscosity solution of \eqref{3.2}, for some uniformly elliptic operators $\tilde{F}$ and $\tilde{G}$. Hence, Lemma \ref{l2.1} provides with
	$$
	\sup_{B_{1/2}}|(u_2,v_2)|\le 2^{-\frac{2}{1-pq}}.
	$$
    Scaling back, we have
    $$
    \sup_{B_{1/4}}|(u_1,v_1)|\le 2^{-2\cdot\frac{2}{1-pq}}.
    $$	
    Same way we define $u_i,v_i:B_1\rightarrow\R$, $i=3,4,\ldots$, by
    $$
    u_i(x):=2^{\frac{2}{1-pq}(1+p)}u_{i-1}\left(\frac{x}{2}\right)\,\,\,\textrm{ and }\,\,\,v_i(x):=2^{\frac{2}{1-pq}(1+q)}v_{i-1}\left(\frac{x}{2}\right)
    $$
    and deduce that
    $$
    \sup_{B_{1/2}}|(u_i,v_i)|\le 2^{-\frac{2}{1-pq}}.
    $$
    Scaling back provides us with
    $$
    \sup_{B_{1/2^i}}|(u_1,v_1)|\le 2^{-i\cdot\frac{2}{1-pq}}.
    $$
    To finish the proof, for any given $\displaystyle r\in\left(0,\frac{a}{2}\right)$, we choose $i\in\mathbb{N}$ such that 
    $$
    2^{-(i+1)}<\frac{r}{a}\le2^{-i},
    $$
    and estimate
    \begin{equation*}
    \begin{array}{ccl}
    \displaystyle\sup_{B_r}|(u,v)|&\le&\displaystyle\sup_{B_{r/a}}|(u_1,v_1)|\\[0.5cm]
    &\le&\displaystyle\sup_{B_{1/2^i}}|(u_1,v_1)|\\[0.5cm]
    &\le&2^{-i\cdot\frac{2}{1-pq}}\\[0.3cm]
    &\le&\left(\frac{2}{a}\right)^{\frac{2}{1-pq}}r^{\frac{2}{1-pq}}\\[0.3cm]
    &=&Cr^{\frac{2}{1-pq}}.
    \end{array} 
    \end{equation*}    
\end{proof}

\section{Geometric properties of the free boundary}\label{s4}
In this section, we prove geometric measure estimates for the free boundary. In particular, we obtain non-degeneracy and porosity results, and conclude the coincidence of the free boundaries for the system.
\subsection{Non-degeneracy of solutions}
We obtain a non-degeneracy estimate for viscosity solutions with a careful analysis of radial super-solutions (see Appendix \ref{appendixa} below). As was established by Theorem \ref{t3.1}, the quantity $|(u,v)|$, defined by \eqref{2.1}, grows not faster than $r^{\frac{2}{1-pq}}$. Here we prove that it grows exactly with that rate (see Figure \ref{figure1}), provided the operators have constant coefficients. Using Lemma \ref{comparison} and Proposition \ref{radialsupersolution}, we obtain non-degeneracy of solutions, whenever
\begin{equation}\label{extracondition}
	F(0)=G(0)=0.
\end{equation}
\begin{theorem}\label{t6.1}
	If \eqref{ellipticity}, \eqref{extracondition} hold, $(u,v)$ is a non-negative bounded viscosity solution of \eqref{constantcoeff} in $B_1$, and $y\in\overline{\left\{|(u,v)|>0\right\}}\cap B_{1/2}$, then for a constant $c>0$, depending on the dimension, one has
	$$
	\sup_{\overline{B}_r(y)}|(u,v)|\ge cr^{\frac{2}{1-pq}},
	$$
	for any $r\in\left(0,\frac{1}{2}\right)$.
\end{theorem}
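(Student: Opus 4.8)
The plan is to apply the weak comparison principle of Lemma \ref{comparison} to the pair consisting of the explicit radial super-solution of Proposition \ref{radialsupersolution} and the solution $(u,v)$ itself (which is in particular a viscosity sub-solution of \eqref{1.1}). Throughout set $\gamma:=\tfrac{2}{1-pq}$, so that the exponents in Proposition \ref{radialsupersolution} are $\alpha=\gamma(1+p)$, $\beta=\gamma(1+q)$, and $|(\tilde u,\tilde v)|(x)=\big(A^{\frac{1}{1+p}}+B^{\frac{1}{1+q}}\big)|x|^{\gamma}$; thus the barrier has precisely the growth rate we aim to certify from below. Since the inequalities establishing that $(\tilde u,\tilde v)$ is a super-solution of \eqref{1.1} only invoke the uniform ellipticity \eqref{1.2}, which is translation invariant, for any $y$ the translated pair $(\tilde u_y,\tilde v_y)(x):=(A|x-y|^{\alpha},B|x-y|^{\beta})$ is again a viscosity super-solution of \eqref{1.1}.

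I would first reduce the problem to the following statement: for every $y_0$ with $u(y_0)>0$ and $v(y_0)>0$ and every $r>0$ with $\overline{B_r(y_0)}\subset B_1$, one has $\sup_{\overline{B_r(y_0)}}|(u,v)|\ge c\,r^{\gamma}$, where $c:=\min\{A^{1/(1+p)},B^{1/(1+q)}\}$ is universal. This reduction is legitimate because $\{u>0\}\cap\{v>0\}$ is dense in $\overline{\{|(u,v)|>0\}}$: if $u(y)>0$ while $v\equiv 0$ in a ball around $y$, then the second equation forces $0=G(D^2v,\cdot)=u_+^{q}$, hence $u\le 0$, near $y$ — impossible; so $v$, and then by continuity also $u$, is strictly positive at points arbitrarily close to $y$, the symmetric argument handles the case $v(y)>0$, and points of the free boundary are limits of such points. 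Granting the reduced statement, for a general $y$ as in the theorem and $r\in(0,\tfrac12)$ one chooses $y_k\to y$ with $u(y_k),v(y_k)>0$, applies the reduced bound on the balls $\overline{B_{r-|y-y_k|}(y_k)}\subset\overline{B_r(y)}$, and passes to the limit $k\to\infty$.

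To prove the reduced statement I argue by contradiction: suppose $\sup_{\overline{B_r(y_0)}}|(u,v)|<c\,r^{\gamma}$. Then on $\overline{B_r(y_0)}$ we have $u<c^{1+p}r^{\alpha}\le A\,r^{\alpha}$ and $v<c^{1+q}r^{\beta}\le B\,r^{\beta}$, so on $\partial B_r(y_0)$ the translated barrier satisfies $\tilde u_{y_0}=A\,r^{\alpha}>u$ and $\tilde v_{y_0}=B\,r^{\beta}>v$. Lemma \ref{comparison}, applied on $B_r(y_0)$ with super-solution $(\tilde u_{y_0},\tilde v_{y_0})$ and sub-solution $(u,v)$, then gives $\max\{\tilde u_{y_0}-u,\ \tilde v_{y_0}-v\}\ge 0$ in $B_r(y_0)$. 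But $\tilde u_{y_0}(y_0)=\tilde v_{y_0}(y_0)=0$ whereas $u(y_0)>0$ and $v(y_0)>0$, so $\max\{\tilde u_{y_0}-u,\ \tilde v_{y_0}-v\}(y_0)<0$, a contradiction; hence the reduced bound holds, and with it the theorem.

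The point I expect to be the real obstacle — and which has no counterpart in the scalar dead-core problem of \cite{T16} — is the reduction step rather than the comparison itself. Because Lemma \ref{comparison} only asserts that \emph{one} of the two differences is nonnegative, the contradiction can be extracted only at a point where \emph{both} components are strictly positive; one must therefore exploit the coupling of the system to see that such points are dense in $\overline{\{|(u,v)|>0\}}$, and then transfer the estimate to a general base point by the limiting argument above. A secondary, routine point is to check that the translated radial profile remains an admissible viscosity super-solution of \eqref{1.1}, which follows verbatim from the proof of Proposition \ref{radialsupersolution} since only \eqref{1.2} is used there.
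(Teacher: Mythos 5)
Your proof follows the same strategy as the paper: translate the explicit radial super-solution from Proposition \ref{radialsupersolution} to the base point, and invoke the weak comparison principle of Lemma \ref{comparison} to produce a contradiction at a point where \emph{both} components are strictly positive. The only structural difference is that the paper first truncates, setting $\hat u:=\min\{u,\tilde u\}$, $\hat v:=\min\{v,\tilde v\}$ in the ball (a super-solution by \cite[Lemma 3.1]{IK912}), and then compares $(\hat u,\hat v)$ with $(u,v)$; your direct application of Lemma \ref{comparison} with the translated barrier $(\tilde u_{y_0},\tilde v_{y_0})$ as super-solution is cleaner and reaches the same contradiction, since $\tilde u_{y_0}>u$ and $\tilde v_{y_0}>v$ on $\partial B_r(y_0)$ already supply the boundary hypothesis of the lemma.

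There is, however, a genuine gap in your density argument, which you yourself identified as the delicate step. You argue that if $u(y)>0$ while $v\equiv 0$ in a ball around $y$, then the second equation forces $0=G(D^2v,\cdot)=u_+^q$. But $v\equiv 0$ gives $D^2v=0$, so the identity is $G(0,\cdot)=u_+^q$; the left-hand side need not vanish. The hypotheses of Theorem \ref{t6.1} include only uniform ellipticity \eqref{1.2} and continuity of coefficients --- the normalization $F(0,x)=G(0,x)=0$ is introduced by the paper only later as \eqref{extracondition}, as an \emph{additional} assumption for Theorem \ref{liouvthm}. Without it, $v\equiv 0$ in a ball merely forces $u=G(0,\cdot)^{1/q}$ there, which does not contradict $u>0$, so the density of $\{u>0\}\cap\{v>0\}$ in $\overline{\{|(u,v)|>0\}}$ does not follow. (For what it is worth, the paper's own reduction is no less compressed: it asserts that $y\in\{|(u,v)|>0\}$ means ``$u(y)>0$ \emph{and} $v(y)>0$'', whereas $|(u,v)|(y)>0$ only gives the disjunction.) To close the gap one should either assume \eqref{extracondition} in Theorem \ref{t6.1} --- under which your argument works verbatim --- or supply a different argument ruling out an open set where exactly one component vanishes identically.
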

\begin{proof}
Since $u$ and $v$ are continuous, it is enough to prove the theorem for the points $y\in\left\{|(u,v)|>0\right\}\cap B_{1/2}$, i.e. $u(y)>0$ and $v(y)>0$. By translation, we may assume, without loss of generality, that $y=0$. Let now $\tilde{u}$ and $\tilde{v}$ be as in Proposition \ref{radialsupersolution}. Note that if for a $\xi\in\partial B_r(y)$ one has
	\begin{equation}\label{enoughtoshow}
		\left(u(\xi),v(\xi)\right)\ge\left(\tilde{u}(\xi),\tilde{v}(\xi)\right),
	\end{equation} 
	then
	$$
	\sup_{\overline{B}_r}|(u,v)|\ge|(u(\xi),v(\xi))|\ge cr^{\frac{2}{1-pq}}.
	$$
	Hence, it is enough to check \eqref{enoughtoshow}. Suppose it is not true, i.e., 
	$$
	(u(\xi),v(\xi))<(\tilde{u}(\xi),\tilde{v}(\xi)),\,\,\,\forall\xi\in\partial B_r.
	$$
	Define
	\begin{equation*}
	\hat{u}=
	\left\{
	\begin{array}{ccc}
	\min\{u,\tilde{u}\} & \text{in} & B_r\\[0.2cm]
	u & \text{in} & B_r^c
	\end{array} 
	\right.
	\end{equation*}	
	and
	\begin{equation*}
	\hat{v}=
	\left\{
	\begin{array}{ccc}
	\min\{v,\tilde{v}\} & \text{in} & B_r\\[0.2cm]
	v & \text{in} & B_r^c.
	\end{array} 
	\right.
	\end{equation*}	
	Since $(\tilde{u},\tilde{v})$ is a super-solution and $(u,v)$ is a solution of \eqref{constantcoeff}, then $(\hat{u},\hat{v})$ also is a super-solution (see \cite[Lemma 3.1]{IK912}). On the other hand, Lemma \ref{comparison} implies
	$$
	0<u(0)\le\hat{u}(0)=0\,\,\,\textrm{ or }\,\,\,0<v(0)\le\hat{v}(0)=0,
	$$
	which is a contradiction.
\end{proof}
\begin{corollary}\label{c4.1}
	If \eqref{ellipticity}, \eqref{extracondition} hold, $(u,v)$ is a non-negative viscosity solution of \eqref{constantcoeff}, and $x_0\in\partial\{|(u,v)|>0\}$, then
	$$
	c\,r^{\frac{2}{1-pq}}\le\sup_{B_r(x_0)}|(u,v)|\le C\,r^{\frac{2}{1-pq}} 
	$$
	where $r>0$ is small enough and $c,\,C>0$ are constants depending on the dimension.
\end{corollary}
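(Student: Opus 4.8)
The plan is to read the corollary directly off the two estimates already established: Theorem \ref{t3.1} provides the upper bound and Theorem \ref{t6.1} the lower bound. The only genuine tasks are to localize --- both theorems are stated for solutions in $B_1$ with the distinguished point in $B_{1/2}$ --- and to reconcile the open ball $B_r(x_0)$ in the statement with the closed ball appearing in Theorem \ref{t6.1}.

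First I would localize. Fix $x_0\in\partial\{|(u,v)|>0\}$ and choose $\rho>0$ so that $B_\rho(x_0)\subset\Omega$. The rescaling $\bar u(y):=\mu\,u(x_0+\rho y)$, $\bar v(y):=\nu\,v(x_0+\rho y)$ dictated by the homogeneity of the system, namely $\mu=\rho^{-2(1+p)/(1-pq)}$ and $\nu=\rho^{-2(1+q)/(1-pq)}$, turns $(u,v)$ into a non-negative bounded viscosity solution $(\bar u,\bar v)$ of a system of the form \eqref{1.1} in $B_1$, with transformed operators still obeying \eqref{1.2} with the same ellipticity constants, with $0\in\partial\{|(\bar u,\bar v)|>0\}\cap B_{1/2}$, and with $|(\bar u,\bar v)|(y)=\rho^{-2/(1-pq)}\,|(u,v)|(x_0+\rho y)$ --- the exponents being chosen precisely so that $\mu^{1/(1+p)}=\nu^{1/(1+q)}=\rho^{-2/(1-pq)}$.

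Next I would apply the two theorems to $(\bar u,\bar v)$ at the origin and scale back. Theorem \ref{t3.1} gives $|(u(x),v(x))|\le C\,|x-x_0|^{2/(1-pq)}$ in a fixed neighbourhood of $x_0$, hence $\sup_{B_r(x_0)}|(u,v)|\le C\,r^{2/(1-pq)}$ for all small $r$. Theorem \ref{t6.1}, applicable because $x_0\in\overline{\{|(u,v)|>0\}}$, gives $\sup_{\overline{B}_r(x_0)}|(u,v)|\ge c\,r^{2/(1-pq)}$ for all small $r$; since $|(u,v)|$ is continuous ($u,v\ge0$ are continuous and the maps $t\mapsto t^{1/(1+p)}$, $t\mapsto t^{1/(1+q)}$ are continuous on $[0,\infty)$), its supremum over $\overline{B}_r(x_0)$ coincides with that over $B_r(x_0)$, so the lower bound transfers. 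Relabelling constants finishes the proof.

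I do not anticipate a real obstacle here: all the analytic content lives in Theorems \ref{t3.1} and \ref{t6.1}, and what remains is the standard normalization to $B_1$ together with the elementary observation that a continuous function has equal suprema over $B_r$ and $\overline{B}_r$. If one prefers to sidestep even that remark, it suffices to invoke Theorem \ref{t6.1} at radius $r/2$ and use $\overline{B}_{r/2}(x_0)\subset B_r(x_0)$, absorbing the factor $2^{-2/(1-pq)}$ into $c$.
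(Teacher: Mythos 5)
Your proposal is correct and takes essentially the same route as the paper, which states this corollary without proof precisely because it is the direct concatenation of the upper bound from Theorem \ref{t3.1} and the lower bound from Theorem \ref{t6.1}. The rescaling bookkeeping and the open-versus-closed-ball remark you include are harmless extra care rather than new content.
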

\begin{figure}
	\psscalebox{0.5 0.5} 
	{
		\begin{pspicture}(0,-4.646041)(19.06,4.646041)
		\definecolor{colour0}{rgb}{0.2627451,0.25490198,0.25490198}
		\definecolor{colour1}{rgb}{0.28627452,0.2784314,0.2784314}
		\definecolor{colour2}{rgb}{0.02745098,0.47058824,0.5137255}
		\definecolor{colour3}{rgb}{0.53333336,0.53333336,0.53333336}
		\psline[linecolor=black, linewidth=0.04](0.0,-3.7611387)(13.2,-3.7611387)
		\psbezier[linecolor=colour0, linewidth=0.04](4.8,-3.7611387)(7.2,-3.7611387)(11.2,-0.56113863)(13.2,4.638861381081912)
		\psbezier[linecolor=colour1, linewidth=0.04](5.2,-3.7611387)(7.6,-3.7611387)(11.2,-2.5611386)(13.2,-0.16113861891808823)
		\psbezier[linecolor=colour2, linewidth=0.06](5.2,-3.7611387)(6.8,-3.7611387)(7.0938168,-2.984024)(8.0,-2.561138618918088)(8.906183,-2.1382532)(10.865196,-2.0394187)(11.6,-1.3611386)(12.334804,-0.6828585)(12.4,1.4388614)(13.2,2.2388613)
		\rput[bl](1.0,-3.5){{\huge dead-core}}
		\rput[bl](8.7,1.5){{\huge $|x|^\frac{2}{1-pq} \sim$}}
		\rput[bl](11.5,-2.9){{\huge $\sim |x|^\frac{2}{1-pq}$}}
		\rput[bl](13.3,1.2){{\huge $|(u,v)|$}}
		\psline[linecolor=colour3, linewidth=0.06](0.0,-3.7611387)(5.2,-3.7611387)
		\psdots[linecolor=black, dotsize=0.2](5.2,-3.7611387)
		\rput[bl](12.3,-4.5611386){{\huge $\mathbb{R}^n$}}
		\end{pspicture}
	}
	\caption{The growth of $|(u,v)|$ near the free boundary}\label{figure1}
\end{figure}

\subsection{Porosity of the free boundary}
Next, we establish positive density and porosity results for the free boundary. Moreover, we show that $(n-\varepsilon)$-dimensional Hausdorff measure of the free boundary is finite, where $\varepsilon>0$. We use $|E|$ for the $n$-dimensional Lebesgue measure of the set $E$. We recall the definition of a porous set.
\begin{definition}\label{porous}
	The set $E\subset\R^n$ is called porous with porosity $\sigma$, if there is $R>0$ such that $\forall x\in E$ and $\forall r\in (0,R)$ there exists $y\in\R^n$ such that
	$$
	B_{\sigma r}(y)\subset	B_{r}(x)\setminus E.
	$$
\end{definition}
A porous set of porosity $\sigma$ has Hausdorff dimension not exceeding
$n-c\sigma^n$, where $c>0$ is a constant depending only on the dimension. In particular, a porous set has Lebesgue measure zero (see \cite[Theorem 2.1]{KR97}, for instance).
\begin{lemma}\label{c6.1}
	If \eqref{ellipticity}, \eqref{extracondition} hold, $(u,v)$ is a non-negative bounded viscosity solution of \eqref{constantcoeff} in $B_1$, and $y\in\partial\{|(u,v)|>0\}\cap B_{1/2}$, then
	$$
	|B_\rho(y)\cap\{|(u,v)|>0\}|\ge c\rho^n,\,\,\,\,\forall\rho\in\left(0,\frac{1}{2}\right),
	$$
	where $c>0$ is a constant depending only on $\lambda$, $\Lambda$, $\|u\|_\infty$, $\|v\|_\infty$, $p$, $q$ and $n$. Moreover, the free boundary is a porous set, and as a consequence
	$$
	\mathcal{H}^{n-\varepsilon}\left(\partial\{|(u,v)|>0\}\cap B_{1/2}\right)<\infty,
	$$
	where $\varepsilon>0$ is a constant depending only on $\lambda$, $\Lambda$, $p$, $q$ and $n$.
\end{lemma}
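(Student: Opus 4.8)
The plan is to prove the three assertions of Lemma~\ref{c6.1} in order, using the two-sided control on $|(u,v)|$ near the free boundary provided by Theorem~\ref{t3.1} and Theorem~\ref{t6.1} (summarized in Corollary~\ref{c4.1}). First, for the positive density estimate, fix $y\in\partial\{|(u,v)|>0\}\cap B_{1/2}$ and $\rho\in(0,1/2)$. By non-degeneracy (Theorem~\ref{t6.1}), there exists a point $z\in\overline{B_{\rho/4}(y)}$ with $|(u(z),v(z))|\ge c(\rho/4)^{\frac{2}{1-pq}}$. On the other hand, if $w\in\partial\{|(u,v)|>0\}$ is a free boundary point, the regularity estimate \eqref{mainest} forces $|(u(x),v(x))|\le C|x-w|^{\frac{2}{1-pq}}$. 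Applying this with $x=z$ and choosing $w$ to be a nearest free boundary point to $z$, we get $c(\rho/4)^{\frac{2}{1-pq}}\le C\,\mathrm{dist}(z,\partial\{|(u,v)|>0\})^{\frac{2}{1-pq}}$, so the ball $B_{r_0}(z)$ with $r_0:=\theta\rho$, $\theta:=\frac{1}{4}(c/C)^{\frac{1-pq}{2}}$, lies entirely inside $\{|(u,v)|>0\}$. Since $z\in B_{\rho/4}(y)$ and $r_0\le\rho/4$ (shrinking $\theta$ if necessary), one has $B_{r_0}(z)\subset B_\rho(y)$, whence $|B_\rho(y)\cap\{|(u,v)|>0\}|\ge|B_{r_0}(z)|=\omega_n\theta^n\rho^n=:c\rho^n$.

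Second, for porosity of the free boundary, I would show that the complementary ball $B_{r_0}(z)$ constructed above actually witnesses the porosity condition in Definition~\ref{porous}, but with the roles reversed: I need a ball of definite size inside $B_r(x)\setminus\partial\{|(u,v)|>0\}$. Take $x\in\partial\{|(u,v)|>0\}\cap B_{1/4}$ and $r\in(0,R)$ with $R$ small; by non-degeneracy there is $z\in\overline{B_{r/4}(x)}$ with $|(u(z),v(z))|$ bounded below by a multiple of $r^{\frac{2}{1-pq}}$, and as before $B_{\theta r}(z)\subset\{|(u,v)|>0\}$, hence $B_{\theta r}(z)\cap\partial\{|(u,v)|>0\}=\emptyset$ and $B_{\theta r}(z)\subset B_r(x)$. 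Thus the free boundary is porous with porosity $\sigma=\theta$. I would remark that the argument is local, so it applies on $B_{1/2}$ after a standard covering/scaling reduction.

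Third, the Hausdorff measure bound is then immediate from the cited general fact (\cite[Theorem 2.1]{KR97}): a porous set with porosity $\sigma$ has Hausdorff dimension at most $n-c\sigma^n$ for a dimensional constant $c>0$; setting $\varepsilon:=\tfrac{1}{2}c\sigma^n$ (which depends only on $\lambda,\Lambda,p,q,n$ through $\theta$), we get $\dim_{\mathcal H}(\partial\{|(u,v)|>0\}\cap B_{1/2})\le n-2\varepsilon<n-\varepsilon$, and therefore $\mathcal H^{n-\varepsilon}(\partial\{|(u,v)|>0\}\cap B_{1/2})=0<\infty$ (finiteness is all that is claimed).

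The main obstacle is the first step: one must be careful that the non-degeneracy point $z$ and the escape radius $r_0$ can be chosen uniformly, i.e. with constants independent of $y$ and $\rho$, and that $B_{r_0}(z)$ genuinely sits inside $B_\rho(y)$ — this requires tracking how the constants $c$ (from Theorem~\ref{t6.1}) and $C$ (from Theorem~\ref{t3.1}) combine, and choosing the auxiliary radii ($\rho/4$ for the location of $z$, $\theta\rho$ for the inner ball) so that all inclusions hold simultaneously; once the inner-ball construction is in place, porosity and the Hausdorff estimate follow formally. A minor technical point is the passage from free boundary points to the nearest one and the use of the regularity estimate \eqref{mainest} there, which is valid since \eqref{mainest} holds at every free boundary point in $B_{1/2}$ and for all $x\in B_{1/4}$; a routine rescaling absorbs the restriction on the base point.
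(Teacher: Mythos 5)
Your argument is correct and follows essentially the same route as the paper's: use non-degeneracy (Theorem \ref{t6.1}) to locate a point $z$ (the paper's $\xi_\rho$) where $|(u,v)|$ is comparable to $\rho^{2/(1-pq)}$, then use the regularity estimate \eqref{mainest} to conclude that a ball of radius proportional to $\rho$ around $z$ avoids the free boundary, which simultaneously gives positive density, porosity, and (via the cited Koskela--Rohde result) the Hausdorff measure bound. The only cosmetic differences are that the paper phrases the inner-ball step as a contradiction (assume $B_{\tau\rho}(\xi_\rho)$ meets the free boundary and derive $c\rho^{2/(1-pq)}\le C(\tau\rho)^{2/(1-pq)}$), whereas you argue directly via the nearest free boundary point, and for porosity the paper uses a convex combination $y^*=t\xi_\rho+(1-t)y$ to place the witnessing ball inside $B_\rho(y)$ while you simply shrink the radius so that $B_{r_0}(z)\subset B_{\rho/2}(y)$; both bookkeeping choices are equally valid.
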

\begin{proof}
	For a $\rho\in(0,1/2)$, Theorem \ref{t6.1} guarantees the existence of a point $\xi_\rho$ such that
	\begin{equation}\label{positivedensity}
	|\left(u(\xi_\rho),v(\xi_\rho)\right)|\ge c\rho^{\frac{2}{1-pq}}.
	\end{equation}
	On the other hand, for a $\tau>0$, take
	$$
	y_0\in B_{\tau\rho}(\xi_\rho)\cap\partial\{|(u,v)|>0\}\neq\emptyset.
	$$
	Recalling Theorem \ref{t3.1} and using \eqref{positivedensity}, we have
	$$
	c\rho^{\frac{2}{1-pq}}\le|(u(\xi_\rho),v(\xi_\rho))|\le\sup_{B_{\rho\tau}(y_0)}\le C\left(\rho\tau\right)^{\frac{2}{1-pq}},
	$$
	which is a contradiction once
	$$
	\tau<\left(\frac{c}{C}\right)^\frac{1-pq}{2}.
	$$
	Therefore, for $\tau>0$ small enough
	$$
	B_{\tau\rho}(\xi_\rho)\subset\{|(u,v)|>0\},
	$$
	and hence
	$$
	|B_\rho(y)\cap\{|(u,v)|>0\}|\ge|B_\rho(y)\cap B_{\tau\rho}(\xi_\rho)|\ge c\rho^n.
	$$
	It remains to check the finiteness of the $(n-\varepsilon)$-dimensional Hausdorff measure of the free boundary, which, as observed above, is a consequence of its porosity. To show the latter, it is enough to take 
	$$
	y^*:=t\xi_\rho+(1-t)y,
	$$ 
	where $t$ is close enough to $1$ to guarantee
	$$
	B_{\frac{\tau}{2}\rho}(y^*)\subset B_\tau(\xi_\rho)\cap B_\rho(y)\subset B_\rho(y)\setminus\partial\{|(u,v)|>0\},
	$$ 
	i.e., the set $\partial\{|(u,v)|>0\}\cap B_{1/2}$ is a $\frac{\tau}{2}$-porous set, and the result follows.
\end{proof}
\subsection{Coincidence of the free boundaries}
We finish this section by observing that the free boundary of \eqref{constantcoeff} coincides with that of $u$ and $v$ whenever both $p$ and $q$ are strictly positive, and $F$ and $G$ are convex or concave functions (for an illustration, see Figure \ref{figure_radial}).
\begin{theorem}\label{fbcoincidence}
If \eqref{ellipticity}, \eqref{extracondition} hold, $F$ and $G$ are convex (concave) functions, $(u,v)$ is a non-negative viscosity solution of \eqref{constantcoeff} and $pq>0$, then 
	$$
	\partial\{|(u,v)|>0\}=\partial\{u>0\}=\partial\{v>0\}.
	$$
\end{theorem}

\begin{proof}
Since $F$ is convex (concave), and the right-hand side is Lipschitz, then $u\in C_{\loc}^{2,\mu}$, for some $\mu\in(0,1)$, \cite{CC95}. Observe that $|D^2u|=0$ on $\partial\{u>0\}$. Indeed, for any fixed indexes $i$, $j$ and for $z\in\partial\{u>0\}$, Theorem \ref{t3.1} implies
\begin{equation*}
\begin{split}
    |D_{ij}u(z)|&=\lim_{t\to 0}\left|\dfrac{u\left(z+t(e_i+e_j)\right)-u(z+te_i)-u(z+te_j)+u(z)}{t^2}\right|\\
    &=\lim_{t\to 0}\left|\dfrac{u\left(z+t(e_i+e_j)\right)-u(z+te_i)-u(z+te_j)}{t^2}\right|\\
    &\leq C\lim_{t\to0}|t|^{\frac{2(1+p)}{1-pq}-2}=0,
\end{split}
\end{equation*}
where in the last step we used 
$$
\frac{2(1+p)}{1-pq}>2.
$$
Here $e_l$, $l=1,2,\ldots,n$, is the $l$-th vector of the standard orthonormal basis in $\R^n$. Thus, $D^2u(z)=0$, and \eqref{extracondition} gives $F\left(D^2u(z)\right)=F(0)=0$. Therefore, from the first equation in \eqref{constantcoeff}, we conclude that $v_+(z)=0$, i.e.,
	\begin{equation}\label{7.1}
		z\in\{v=0\}.
	\end{equation}
	On the other hand, if $B_\rho\subset\{v=0\}$, then $D^2v=0$ in $B_\rho$, which yields $u=0$ in $B_\rho$. Thus, $\{v=0\}^\mathrm{o}=\{u=0\}^\mathrm{o}$, which combined with \eqref{7.1}, implies that $z\in\partial\{v>0\}$, i.e., $\partial\{u>0\}\subset\partial\{v>0\}$. 
 
 Similarly, we can see that $\partial\{v>0\}\subset\partial\{u>0\}$, and the proof follows.
\end{proof}

\section{Liouville type results}\label{s5}
As an application of the regularity result above, exploiting the ideas from \cite{TU14} (see also \cite{AT22}), we obtain a Liouville type theorem for solutions of \eqref{1.1} in $\Omega=\R^n$. We refer to them as entire solutions of the system. Although Theorem \ref{t3.1} provides regularity information only across the free boundary, it is enough to show that the only entire solution, which vanishes at a point and has a growth suitably controlled at infinity, is the trivial one.
\begin{theorem}\label{t5.1}
		Let $(u,v)$ be a non-negative viscosity solution of
\begin{equation}\label{entire}
		\left\{
		\begin{array}{ccc}
		F(D^2u,x)=v_+^p & \text{in} & \mathbb{R}^n\\[0.2cm]
		G(D^2v,x)=u_+^q & \text{in} & \mathbb{R}^n,
		\end{array} 
		\right.
\end{equation}			
and $u(x_0)=v(x_0)=0$. If
	\begin{equation}\label{5.1}
		|(u(x),v(x))|=o\left(|x|^\frac{2}{1-pq}\right),\,\,\,\textrm{ as }\,\,\,|x|\rightarrow\infty,
	\end{equation}
	then $u\equiv v\equiv0$.
\end{theorem}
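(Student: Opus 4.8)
The plan is to use the sharp free-boundary estimate of Theorem \ref{t3.1} at every scale and let the scale tend to infinity: assumption \eqref{5.1}, being a \emph{little-oh} (not merely an $O(\cdot)$), forces the rescalings of $(u,v)$ to become uniformly small on $B_1$, which makes the constant in the regularity estimate collapse to zero, so that only the trivial solution survives.

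First I would reduce to a free-boundary point. Assume, towards a contradiction, that $(u,v)\not\equiv(0,0)$. Then $\{|(u,v)|>0\}$ is a nonempty, open, \emph{proper} subset of $\R^n$ (proper because $u(x_0)=v(x_0)=0$), hence it has a boundary point; choose $x_1\in\partial\{|(u,v)|>0\}$, so that $u(x_1)=v(x_1)=0$, and translate so that $x_1=0$. Writing $\alpha:=\tfrac{2(1+p)}{1-pq}$ and $\beta:=\tfrac{2(1+q)}{1-pq}$, the inequalities $u^{1/(1+p)}\le|(u,v)|$ and $v^{1/(1+q)}\le|(u,v)|$ together with \eqref{5.1} give $u(x)=o(|x|^{\alpha})$ and $v(x)=o(|x|^{\beta})$ as $|x|\to\infty$.

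Next comes the rescaling. For $R>0$ put $u_R(x):=R^{-\alpha}u(Rx)$ and $v_R(x):=R^{-\beta}v(Rx)$. Since $\alpha-2=p\beta$ and $\beta-2=q\alpha$, a direct computation shows that $(u_R,v_R)\ge0$ is a viscosity solution in $\R^n$ of
$$F_R(D^2u_R,x)=(v_R)_+^{\,p},\qquad G_R(D^2v_R,x)=(u_R)_+^{q},$$
where $F_R(M,x):=R^{-p\beta}F(R^{p\beta}M,Rx)$ and $G_R(M,x):=R^{-q\alpha}G(R^{q\alpha}M,Rx)$ again satisfy \eqref{1.2} with the \emph{same} $\lambda,\Lambda$. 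Moreover $|(u_R,v_R)|(x)=R^{-\frac{2}{1-pq}}|(u,v)|(Rx)$, so $0\in\partial\{|(u_R,v_R)|>0\}$, while from the decay just recorded $\|u_R\|_{L^\infty(B_1)}=R^{-\alpha}\sup_{B_R}u\to0$ and $\|v_R\|_{L^\infty(B_1)}\to0$ as $R\to\infty$. Applying Theorem \ref{t3.1} to $(u_R,v_R)$ in $B_1$ at the free-boundary point $0\in B_{1/2}$ yields $C_R>0$, depending only on $\lambda,\Lambda,p,q,\|u_R\|_{L^\infty(B_1)},\|v_R\|_{L^\infty(B_1)}$, with $|(u_R,v_R)|(y)\le C_R|y|^{\frac{2}{1-pq}}$ for $y\in B_{1/4}$; undoing the scaling, $|(u,v)|(z)\le C_R|z|^{\frac{2}{1-pq}}$ for every $|z|\le R/4$. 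Fixing $z\in\R^n$ and letting $R\to\infty$ forces $|(u,v)|(z)=0$, provided $C_R\to0$; since $z$ is arbitrary this gives $u\equiv v\equiv0$, the desired contradiction.

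The one point that needs care — and which I expect to be the main obstacle — is the claim $C_R\to0$ as $R\to\infty$: this is exactly where the \emph{little-oh} in \eqref{5.1} is used (a bare $O(\cdot)$ would only reproduce the extremal radial growth of Proposition \ref{radialsupersolution}). One must read off from the scaling built into the proof of Theorem \ref{t3.1} that its constant is a dimensional factor times a strictly positive power of $\|u\|_\infty+\|v\|_\infty$, so that it vanishes as the sup-norms do; this works directly when $p<1$, and when $p\ge1$ (hence $q<1$, since $pq<1$) one uses instead the symmetric form of the estimate obtained by interchanging the two equations, which is legitimate because $|(u,v)|$ and the exponent $\tfrac{2}{1-pq}$ are symmetric in $(p,q)$. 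Alternatively the argument can be run against non-degeneracy, with no constant-tracking at all: if $(u,v)\not\equiv(0,0)$ then $\{u>0\}\cap\{v>0\}\neq\emptyset$ (if $u>0$ on a ball, $v$ cannot vanish identically there, since there $G(D^2v,\cdot)=u_+^q>0$), and Theorem \ref{t6.1}, rescaled to the entire setting exactly as above, gives $\sup_{B_r(y)}|(u,v)|\ge c\,r^{\frac{2}{1-pq}}$ for every $r>0$ at such a point $y$, which contradicts \eqref{5.1} once $r$ is large.
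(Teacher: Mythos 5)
Your main argument is essentially the paper's own proof: rescale $(u,v)$ by $u_k(x)=k^{-\alpha}u(kx)$, $v_k(x)=k^{-\beta}v(kx)$, apply Theorem~\ref{t3.1} at scale $k$, and let $k\to\infty$ using the fact that the constant collapses to zero. You state more explicitly than the paper does the two points on which this hinges — that the rescaled pair solves the same class of systems with the same ellipticity constants (because $\alpha-2=p\beta$ and $\beta-2=q\alpha$), and that the \emph{little-oh} hypothesis drives $\|u_R\|_{L^\infty(B_1)}+\|v_R\|_{L^\infty(B_1)}\to0$, which in turn makes the constant of Theorem~\ref{t3.1} vanish; that is exactly what the paper asserts when it writes ``$C_k\to0$'' without elaboration, and your tracing of the power of $\|u\|_\infty+\|v\|_\infty$ in that constant (with the $p<1$ versus $p\ge1$ dichotomy handled by symmetrizing in $(p,q)$) is a genuine tightening of the write-up.

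One caveat on the alternative route you sketch. The claim that $\{u>0\}\cap\{v>0\}\neq\emptyset$ whenever $(u,v)\not\equiv(0,0)$ — argued by saying that $v$ cannot vanish on a ball where $u>0$ because $G(D^2v,\cdot)=u_+^q>0$ there — silently uses the normalization $G(0,\cdot)=0$: if $v\equiv0$ on a ball, the viscosity test only produces $G(0,x)=u_+^q(x)$ there, which is no contradiction unless $G(0,\cdot)$ is assumed to vanish (or be $\le0$). That normalization is imposed only in Theorem~\ref{liouvthm} via \eqref{extracondition}, not here, so the step does not follow under the hypotheses of Theorem~\ref{t5.1}. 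You can avoid the issue altogether: Theorem~\ref{t6.1} as stated only asks for $y\in\overline{\{|(u,v)|>0\}}$, so any point where $u$ \emph{or} $v$ is positive should suffice to invoke it and contradict \eqref{5.1} after rescaling. Still, beware that the paper's own proof of Theorem~\ref{t6.1} effectively uses $u(y)>0$ \emph{and} $v(y)>0$, so if you run the alternative you are relying on the non-degeneracy statement being valid in the generality in which it is stated; the main scaling argument has no such dependence and is the safer route.
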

\begin{proof}
Without loss of generality, we may assume that $x_0=0$. We then define
$$
u_k(x):=k^{\frac{-2(1+p)}{1-pq}}u(kx)\,\,\,\textrm{ and }\,\,\,v_k(x):=k^{\frac{-2(1+q)}{1-pq}}v(kx),
$$
for $k\in\mathbb{N}$ and note that the pair $(u_k,v_k)$ is a viscosity solution of the system \eqref{entire} in $B_1$. Moreover, $u_k(0)=v_k(0)=0$, since $0\in\partial\{|(u,v)|>0\}$. Therefore, one can apply Theorem \ref{t3.1} to estimate $|(u_k,v_k)|$. More precisely, let $x_k\in\overline{B}_r$ be such that $|(u_k,v_k)|$ reaches its supremum at that point, for $r>0$ small. Applying Theorem \ref{t3.1}, we obtain
\begin{equation}\label{5.2}
	|(u_k(x_k),v_k(x_k))|\le C_k|x_k|^{\frac{2}{1-pq}},
\end{equation}
where $C_k>0$ goes to zero, as $k\rightarrow0$. Thus, if $|kx_k|$ is bounded as $k\rightarrow\infty$, then $|(u(kx_k),v(kx_k))|$ remains bounded. The latter implies
\begin{equation}\label{5.3}
	|(u_k,v_k)|\rightarrow0,\,\,\,\textrm{ as }\,\,\,k\rightarrow\infty.
\end{equation}
Note that due to \eqref{5.1}, \eqref{5.3} remains true also in the case when $|kx_k|\rightarrow\infty$, as $k\rightarrow\infty$. In fact, from \eqref{5.1} one gets
$$
|(u_k(x_k),v_k(x_k))|\le|kx_k|^{-\frac{2}{1-pq}}k^{-\frac{2}{1-pq}}\rightarrow0.
$$
Our aim is to show that both $u$ and $v$ are identically zero. Let us assume this is not the case. If $y\in\R^n$ is such that $|(u(y),v(y))|>0$, then by choosing $k\in\mathbb{N}$ large enough so $y\in B_{kr}$, and using \eqref{5.2}, \eqref{5.3}, we estimate
\begin{equation*}
\begin{array}{ccl}
\displaystyle\frac{|(u(y),v(y))|}{|y|^{\frac{2}{1-pq}}}&\le&\displaystyle\sup_{B_{kr}}\frac{|(u(x),v(x))|}{|x|^{\frac{2}{1-pq}}}\\[0.8cm]
&=&\displaystyle\sup_{B_r}\frac{|(u_k(x),v_k(x))|}{|x|^{\frac{2}{1-pq}}}\\[0.8cm]
&\le&\displaystyle\frac{|(u(y),v(y))|}{2|y|^{\frac{2}{1-pq}}},
\end{array} 
\end{equation*}    
which implies $|(u(y),v(y))|=0$, a contradiction.
\end{proof}
Additionally, if $F$ and $G$ have constant coefficients and both of them vanish at the origin, then Theorem \ref{t5.1} can be improved by relaxing \eqref{5.1} and not requiring $\partial \{|(u,v)>0|\} \neq \emptyset$.
\begin{theorem}\label{liouvthm}
	Let \eqref{ellipticity}, \eqref{extracondition} hold. If $(u,v)$ is a non-negative viscosity sub-solution of 
	\begin{equation}\label{entire1}
		\left\{
		\begin{array}{ccc}
			F(D^2u)=v_+^p & \text{in} & \mathbb{R}^n\\[0.2cm]
			G(D^2v)=u_+^q & \text{in} & \mathbb{R}^n,
		\end{array} 
		\right.
	\end{equation}		
	and 
	\begin{equation}\label{asymptotic}
	\limsup\limits_{|x| \to \infty} \frac{|(u(x),v(x))|}{|x|^{\frac{2}{1-pq}}} <\min\{A^{\frac{1}{1+p}},B^{\frac{1}{1+q}}\},
	\end{equation}
	where $A$ and $B$ are the constants defined by \eqref{constantA} and \eqref{constantB} respectively, then $u\equiv v\equiv 0$.
\end{theorem}
\begin{proof}
	Set
	$$
	\mathcal{S}_R:=\sup_{\partial B_R} |(u,v)| = \sup_{\partial B_R}\left( u^{\frac{1}{1+p}}+v^{\frac{1}{1+q}}\right).
	$$
    Using \eqref{asymptotic}, we choose $\theta<1$ and $R \gg 1$ such that
	\begin{equation}\label{asymptotic2}
	R^{-\frac{2}{1-pq}}\mathcal{S}_R  \leq \theta m,
	\end{equation}
	where $m:=\min\{A^{\frac{1}{1+p}},B^{\frac{1}{1+q}}\}$. Recall that the pair of functions	
	$$
	\tilde u(x):= A(|x|-r)_+^{\frac{2(1+p)}{1-pq}}\,\,\,\textrm{ and }\,\,\,\tilde v(x):= B(|x|-r)_+^{\frac{2(1+q)}{1-pq}},\,\,\,x \in\R^n,
	$$
	where $r>0$, is a viscosity super-solution of \eqref{entire1} (Proposition \ref{radialsupersolution}), and the corresponding dead-core is the ball of radius $r$ centered at $0$, that is, $\{|(\tilde{u},\tilde{v})|=0\}=B_r$. For $R\gg 1$, taking
	\begin{equation}\label{radius}
	r:=R-\left[\frac{1}{m}\mathcal{S}_R\right]^{\frac{1-pq}{2}} \geq (1-\theta^{\frac{1-pq}{2}}) R,
	\end{equation}
	from \eqref{asymptotic2} on $\partial B_R$ we have
	$$
	\tilde u = A(R-r)^{\frac{2(1+p)}{1-pq}}_+ = A\left[\frac{1}{m}\mathcal{S}_R\right]^{1+p}\ge\sup\limits_{\partial B_R} u
	$$
	and
	$$
	 \tilde v = B(R-r)^{\frac{2(1+q)}{1-pq}}_+ = B\left[\frac{1}{m}\mathcal{S}_R\right]^{1+q}\ge\sup\limits_{\partial B_R} v.
	$$
	Then, by Lemma \ref{comparison}, we deduce
	$$
	\max\{\tilde u - u, \tilde v - v\} \geq 0\,\,\,\mbox{ in } B_R.
	$$
	Thus, for a fixed point $x\in\R^n$, choosing $R$ large enough and using \eqref{radius}, we conclude that either $u(x)=0$ or $v(x)=0$, i.e., 
    $$
	u(x)v(x)=0,\,\,\,x\in\R^n.
    $$
The latter implies 
     \begin{equation}\label{uv}
     	\{u>0\}\subset\{v=0\}\,\,\,\textrm{ and }\,\,\,\{v>0\}\subset\{u=0\}.
     \end{equation}
	Suppose there exists $y\in\R^n$ such that $u(y)>0$. Since $u$ is continuous, then it remains positive in a neighborhood of $y$. From \eqref{uv} we obtain $v=0$ in that neighborhood. On the other hand, from \eqref{extracondition} we have $0=G(0)\ge u^q_+$, which implies that $u(y)=0$, a contradiction.
\end{proof}
\begin{remark}\label{r5.1}
	Observe that the constant on the right-hand side of \eqref{asymptotic} is sharp. Indeed, for the pair of functions $(\tilde{u},\tilde{v})$ defined above, one has equality in \eqref{asymptotic}, but neither $\tilde{u}$ nor $\tilde{v}$ is identically zero.
\end{remark}

\medskip

\textbf{Acknowledgments.} DJA is partially supported by CNPq and grant 2019/0014 Paraiba State Research Foundation (FAPESQ). He thanks the Centre for Mathematics of the University of Coimbra (CMUC) and the Abdus Salam International Centre for Theoretical Physics (ICTP) for great hospitality during his research visits. RT was partially supported by the King Abdullah University of Science and Technology (KAUST), by the Centre for Mathematics of the University of Coimbra (funded by the Portuguese Government through FCT/MCTES, DOI 10.54499/UIDB/00324/2020), and by FCT, DOI 10.54499/2022.02357.CEECIND/CP1714/CT0001. RT thanks the Department of Mathematics of the Universidade Federal da Para\'iba (UFPB) for hospitality and great working conditions during his research visit. 

\appendix

\section{Radial analysis}\label{appendixa}
Here we construct a radial viscosity super-solution for the system \eqref{constantcoeff}. Understanding these radial solutions of the system plays a crucial role in proving the non-degeneracy of solutions and certain measure estimates. As was shown in Section \ref{s2}, the system \eqref{constantcoeff} has a solution. In certain cases, solutions of the system \eqref{constantcoeff} can be constructed explicitly, as shown in the following example. One can easily see that the pair of functions 
$$
u(x):=A\left(|x|-\rho\right)_+^\frac{2(1+p)}{1-pq} \quad \mbox{and} \quad 
v(x):=B\left(|x|-\rho\right)_+^\frac{2(1+q)}{1-pq},
$$
where $A$, $B$ are universal constants depending only on $p$, $q$ and $n$ and $\rho\ge0$, is a viscosity solution for the system (see Figure \ref{figure_radial})
\begin{equation*}
	\left\{
	\begin{array}{ccc}
		\Delta u=v_+^{\,p}, & \text{in} & \R^n,\\[0.2cm]
		\Delta v=u_+^q & \text{in} & \R^n.
	\end{array} 
	\right. 
\end{equation*}
In general, as the operators $F$ and $G$ are assumed to be uniformly elliptic, one can show that the system \eqref{constantcoeff} has a radial viscosity sub- and super-solutions, provided \eqref{extracondition} holds. Below we construct such functions. For that purpose, let us define
$$
\tilde{u}(x):=A|x|^\alpha\,\,\,\,\,\,\textrm{ and }\,\,\,\,\,\tilde{v}(x):=B|x|^\beta,
$$
where 
$$
\alpha:=\frac{2(1+p)}{1-pq},\,\,\,\beta:=\frac{2(1+q)}{1-pq}
$$
and $A$, $B$ are positive constants to be chosen a posteriori. Direct computation gives
$$
\tilde{u}_{ij}(x)=A\alpha\left[(\alpha-2)|x|^{\alpha-4}x_ix_j+\delta_{ij}|x|^{\alpha-2}\right],
$$
where $\delta_{ij}=1$, if $i=j$ and is zero otherwise. Thus, using the ellipticity of $F$ and the fact that $F(0)=0$ in $\R^n$, we estimate
\begin{equation}\label{radial1}
	F(D^2\tilde{u})\le\Lambda\left[A\alpha(\alpha-1)+(n-1)A\alpha\right]|x|^{\alpha-2}.
\end{equation}
The aim is to choose the constants $A$ and $B$ such that $(\tilde{u},\tilde{v})$ is a viscosity super-solution of the system \eqref{constantcoeff}. If $B$ is such that
\begin{equation}\label{B}
	\Lambda\left[A\alpha(\alpha-1)+(n-1)A\alpha\right]=B^p,
\end{equation}
then \eqref{radial1} yields
$$
F(D^2\tilde{u})\le \left[B|x|^{\frac{2(1+q)}{1-pq}}\right]^p.
$$
Similarly,
\begin{equation}\label{radial2}
	G(D^2\tilde{v})\le\Lambda\left[B\beta(\beta-1)+(n-1)B\beta\right]|x|^{\beta-2},
\end{equation}
so if $A$ is such that
\begin{equation}\label{A}
	\Lambda\left[B\beta(\beta-1)+(n-1)B\beta\right]=A^q,
\end{equation}
then \eqref{radial2} gives
$$
G(D^2\tilde{v})\le \left[A|x|^{\frac{2(1+p)}{1-pq}}\right]^q.
$$
Thus, the constants $A$ and $B$ are chosen to satisfy \eqref{B} and \eqref{A}, which provides
\begin{equation}\label{constantA}
	A=\Lambda^{\frac{p+1}{pq-1}}\left[\alpha(\alpha-1)+\alpha(n-1)\right]^\frac{1}{pq-1}\left[\beta(\beta-1)+\beta(n-1)\right]^{\frac{p}{pq-1}}
\end{equation}
and
\begin{equation}\label{constantB}
	B=\Lambda^{\frac{q+1}{pq-1}}\left[\beta(\beta-1)+\beta(n-1)\right]^\frac{1}{pq-1}\left[\alpha(\alpha-1)+\alpha(n-1)\right]^{\frac{q}{pq-1}}.
\end{equation}
In conclusion, $(\tilde{u},\tilde{v})$ is a radial viscosity super-solution of the system \eqref{constantcoeff}. We state it below for reference.
\begin{figure}
	\psscalebox{0.5 0.5} 
	{ 
		\begin{pspicture}(0,-4.7624373)(20.1,4.7624373)
			\definecolor{colour0}{rgb}{0.019607844,0.03529412,0.03529412}
			\definecolor{colour1}{rgb}{0.84705883,0.827451,0.827451}
			\definecolor{colour2}{rgb}{0.007843138,0.4627451,0.5411765}
			\definecolor{colour3}{rgb}{0.29411766,0.27450982,0.27450982}
			\definecolor{colour4}{rgb}{0.36078432,0.3529412,0.3529412}
			\definecolor{colour5}{rgb}{0.30980393,0.2901961,0.2901961}
			\definecolor{colour6}{rgb}{0.003921569,0.41568628,0.52156866}
			\definecolor{colour7}{rgb}{0.007843138,0.44313726,0.5254902}
			\psellipse[linecolor=colour0, linewidth=0.02, fillstyle=solid,fillcolor=colour1, dimen=outer](10.1,-3.637535)(2.2,0.6)
			\psellipse[linecolor=colour2, linewidth=0.04, linestyle=dashed, dash=0.17638889cm 0.10583334cm, dimen=outer](10.08,4.0424647)(4.56,0.72)
			\psellipse[linecolor=colour3, linewidth=0.04, linestyle=dashed, dash=0.17638889cm 0.10583334cm, dimen=outer](10.09,1.1624649)(4.97,0.8)
			\psbezier[linecolor=colour4, linewidth=0.04](5.12,1.1624649)(5.4266667,-0.37467805)(6.58,-3.2261066)(7.92,-3.637535193362155)
			\psbezier[linecolor=colour5, linewidth=0.04](15.06,1.1824648)(14.193334,-1.334678)(13.6,-3.2061067)(12.26,-3.617535193362155)
			\psbezier[linecolor=colour6, linewidth=0.04](5.56,3.9624648)(5.7466664,0.5653219)(6.56,-3.2261066)(7.9,-3.637535193362155)
			\psbezier[linecolor=colour7, linewidth=0.04](14.66,4.0224648)(14.473333,0.6253219)(13.66,-3.1661067)(12.32,-3.577535193362155)
			\psline[linecolor=black, linewidth=0.02, arrowsize=0.05291667cm 2.0,arrowlength=1.4,arrowinset=0.0]{->}(7.94,-4.237535)(8.573181,-2.5775352)(9.04,-3.0623934)
			\rput[bl](6.8,-4.677535){{\Large dead-core}}
			\psdots[linecolor=black, dotsize=0.1](10.1,-3.5975351)
			\psline[linecolor=black, linewidth=0.02, linestyle=dashed, dash=0.17638889cm 0.10583334cm](10.12,-3.617535)(12.26,-3.617535)
			\rput[bl](10.85,-3.5){{\Large $\rho$}}
			\rput[bl](14.8,-1.3){{\huge $\sim (|x|-\rho)_+^\alpha$}}
			\rput[bl](1.5,2.1){{\huge $ (|x|-\rho)_+^\beta \sim$}}
		\end{pspicture}
	}
	\caption{Radial solution}\label{figure_radial}
\end{figure}
\begin{proposition}\label{radialsupersolution}
	If $F$, $G$ satisfy \eqref{ellipticity} and \eqref{extracondition}, then the pair of functions 
	$$
	\tilde{u}(x):=A|x|^\frac{2(1+p)}{1-pq}\,\,\,\,\,\,\textrm{ and }\,\,\,\,\,\tilde{v}(x):=B|x|^\frac{2(1+q)}{1-pq},
	$$
	where $A$ and $B$ are defined by \eqref{constantA} and \eqref{constantB} respectively, is a viscosity super-solution of the system \eqref{constantcoeff}. Similarly, $(\tilde{u},\tilde{v})$ is a radial viscosity sub-solution of \eqref{constantcoeff}, if in the definition of $A$, $B$ the constant $\Lambda$ is substituted by $\lambda$.
\end{proposition}


\begin{thebibliography}{99}	
    \bibitem{AP86} H.W. Alt and D. Phillips, \textit{A free boundary problem for semilinear elliptic equations}, J. Reine Angew. Math. 368 (1986), 63--107.
	\bibitem{ALT16} D.J. Ara\'ujo, R. Leit\~ao and E. Teixeira, \textit{Infinity Laplacian equation with strong absorptions}, J. Functional Analysis 270 (2016), 2249--2267.
	\bibitem{AT22} D.J. Ara\'ujo and R. Teymurazyan, \textit{An optimal Liouville theorem for the porous medium equation}, Arch. Math. (Basel) 118 (2022), 427--433.    
	\bibitem{CC95} L.A. Caffarelli and X. Cabr\'e, \textit{Fully nonlinear elliptic equations}, Colloquium Publications 43 (1995), 104 pages.
	\bibitem{CDV18} L. Caffarelli, L. Duque and H. Vivas, \textit{The two membranes problem for fully nonlinear operators}, Discrete Contin. Dyn. Syst. 38 (2018), 6015--6027.
	\bibitem{CDS17} L. Caffarelli, D. De Silva and O. Savin, \textit{The two membranes problem for different operators}, Ann. Inst. H. Poincar\'e C Anal. Non Lin\'eaire 34 (2017), 899--932.
	\bibitem{CT20} L.A. Caffarelli and I. Tomasetti, \textit{Fully nonlinear equations with applications to grad equations in plasma physics}, Comm. Pure Appl. Math. 76 (2023), 604--615.
	\bibitem{C89} M.G. Crandall, \textit{Semidifferentials, quadratic forms and fully nonlinear elliptic equations of second order}, Ann. Inst. H. Poincar\'e Anal. Non Lin. 6 (1989), 419--435.
	\bibitem{CIL92} M.G. Crandall, H. Ishii and P.L. Lions, \textit{User's guide to viscosity solutions of second order partial differential equations}, Bull. Amer. Math. Soc. 27 (1992), 1--67.	
	\bibitem{DT19} N.M.L. Diehl and R. Teymurazyan, \textit{Reaction-diffusion equations for the infinity Laplacian}, Nonlinear Anal. 199 (2020), 111956, 12 pp.	
	\bibitem{GT01} D. Gilbarg and N.S. Trudinger, \textit{Elliptic partial differential equations of second order}, Classics in Mathematics, Springer, 2001.
	\bibitem{I89} H. Ishii, \textit{On uniqueness and existence of viscosity solutions of fully nonlinear second-order elliptic PDEs}, Comm. Pure Appl. Math. 42 (1989), 15--45.
	\bibitem{I92} H. Ishii, \textit{Perron's method for monotone systems of second-order elliptic partial differential equations}, Differ. Integral Equ. 5 (1992), 1--24.
	\bibitem{IK91} H. Ishii and S. Koike, \textit{Viscosity solutions for monotone systems of second-order elliptic PDEs}, Commun. Part. Diff. Eq. 16 (1991), 1095--1128.
	\bibitem{IK912} H. Ishii and S. Koike, \textit{Viscosity solutions of a system of nonlinear second-order elliptic PDEs arising in switching games}, Funkcial. Ekvac. 34 (1991), 143--155.
	\bibitem{KR97} P. Koskela and S. Rodhe, \textit{Hausdorff dimension and mean porosity}, Math. Ann. 309 (1997), 593--609.
	\bibitem{MNS19} E. Moreira dos Santos, G. Nornberg and N. Soave, \textit{On unique continuation principles for some elliptic systems}, Ann. Inst. H. Poincar\'e Anal. Non Lin\'eaire 38 (2021), 1667--1680.
	\bibitem{MR20} A. Miranda and J.D. Rossi, \textit{A game theoretical approach for a nonlinear system driven by elliptic operators}, SN Partial Differ. Equ. Appl. 1 (2020), Paper No. 14, 41pp.
    \bibitem{P1} D. Phillips, \textit{A minimization problem and the regularity of solutions in the presence of a free boundary}, Indiana Univ. Math. J. 32 (1983), 1--17.
    \bibitem{P2} D. Phillips, \textit{Hausdorff measure estimates of a free boundary for a minimum problem},
    Comm. Partial Differential Equations 8 (1983), 1409--1454.
	\bibitem{S05} L. Silvestre, \textit{The two membranes problem}, Comm. Partial Differential Equations 30 (2005), 245--257.
	\bibitem{SS14} L. Silvestre and B. Sirakov, \textit{Boundary regularity for viscosity solutions of fully nonlinear elliptic equations}, Comm. Partial Differential Equations 39 (2014), 1694--1717.
	\bibitem{ST18} A. Salda\~na and H. Tavares, \textit{Least energy nodal solutions of Hamiltonian elliptic systems with Neumann boundary conditions}, J. Differential Equations 265 (2018), 6127--6165.
	\bibitem{T16} E. Teixeira, \textit{Regularity for the fully nonlinear dead-core problem}, Math. Ann. 364 (2016), 1121--1134.	
	\bibitem{TU14} E. Teixeira and J.M. Urbano, \textit{An intrinsic Liouville theorem for degenerate parabolic equations}, Arch. Math. (Basel) 102 (2014), 483--487.
	\bibitem{WY22} Y. Wu and H. Yu, \textit{On the fully nonlinear Alt-Philips equation}, Int. Math. Res. Not. IMRN 2022, no. 11, 8540--8570.
	\bibitem{Z85} E. Zeidler, \textit{Nonlinear functional analysis and its applications I - Fixed-point theorems}, Springer-Verlag, New York, 1986. xxi+897 pp.
\end{thebibliography}
\end{document}